\providecommand{\U}[1]{\protect\rule{.1in}{.1in}}
\newtheorem{theorem}{Theorem}[section]
\newtheorem{proposition}[theorem]{Proposition}
\newtheorem{example}[theorem]{Example}
\newtheorem{final remark}[theorem]{Final Remark}
\newtheorem{definition}[theorem]{Definition}
\begin{document}

\title{Hyper-ideals of multilinear operators and two-sided polynomial ideals generated by sequence classes}
\author{Geraldo Botelho\thanks{Supported by CNPq Grant
304262/2018-8 and Fapemig Grant PPM-00450-17.}\,\, and Raquel Wood\thanks{Supported by a CAPES scholarship.\newline 2020 Mathematics Subject Classification: 46B45, 47H60, 46G25, 47L22, 47B10.\newline Keywords: Banach sequence spaces, polynomial ideals, multi-ideals, operator ideals.
}}
\date{}
\maketitle

\begin{abstract} In the nonlinear field of multilinear operators and homogeneous polynomials between Banach spaces, we develop a technique, based on the transformation of vector-valued sequences, to create new examples of hyper-ideals of multilinear operators, polynomial hyper-ideals and polynomial two-sided ideals. Several well studied ideals are shown to be actually hyper-ideals or two-sided ideals.
\end{abstract}

\section*{Introduction}

Ideals of multilinear operators (multi-ideals) and ideals of homogeneous polynomials (polynomial ideals) between Banach spaces have been intensively investigated since the seminal 1983 paper by Pietsch \cite{pietsch83}  (see, e.g., the references in \cite{davidson, ewertonjmaa}). The basic idea is the stability of the class with respect to the composition with linear operators, for example: a class $\cal Q$ of homogeneous polynomials is an ideal if the following holds: in the following chain of mappings between Banach spaces
$$G \stackrel{u}{\longrightarrow} E \stackrel{P}{\longrightarrow} F \stackrel{t}{\longrightarrow} H, $$
if $P$ is a continuous homogeneous polynomial belonging to $\cal Q$ and $u$ and $t$ are bounded linear operators, then the composition $t \circ P \circ u$ belongs to $\cal Q$. Several distinguished classes of multilinear operators and of polynomials are ideals, such as compact/weakly compact mappings and a number of nuclear-type and absolutely summing type-mappings.

Special types of multi-ideals and of polynomial ideals were introduced and developed in \cite{ewertonlaa, ewertonlama, ewertonjmaa, raquelsegundo, samuel, popalaa2016, velanga} taking into account the composition with multilinear operators/homogeneous polynomials instead of only with linear operators. More precisely:\\
$\bullet$ A class of multilinear operators which is stable with respect to the composition with multilinear operators on the left-hand side and with linear operators on the right-hand side are called {\it hyper-ideals of multilinear opeators.}\\
$\bullet$ A class of homogeneous polynomials which is stable with respect to the  composition with homogeneous polynomials on the left-hand side and with linear operators on the right-hand side are called {\it polynomial hyper-ideals}.
\\
$\bullet$ A class of homogeneous polynomials which is stable with respect to the  composition with homogeneous polynomials on both the left-hand and right-hand sides are called {\it polynomial two-sided ideals}.

Precise definitions are given in Section \ref{section 1}. Illustrative examples: the classes of compact and weakly compact multilinear operators are hyper-ideals, the class of compact homogeneous polynomials is a two-sided polynomial ideal and the class of weakly compact polynomials is a polynomial hyper-ideal that fails to be a two-sided ideal.

Techniques to create such special types of ideals were developed in the references quoted above. The goal of this paper is to developed a new technique to create hyper and two-sided ideals that, although quite natural, has gone unnoticed. As a consequence, several well studied ideals are shown to be hyper-ideals or two-sided ideals. This new technique is based on the notion of sequence classes, a concept that was introduced in \cite{jamilsonmonat} and has proved to be quite fruitful: applications and new developments can be found in \cite{achourmediterranean, jamilsoncomplut, pacific, davidson, jamilsonjoedsoncolloq, joilsonfabriciomed, baianosmfat}.

The paper is organized as follows. In Section \ref{section 1} we give the definitions that are needed to develop our new technique. In Sections \ref{section 2} and \ref{section 3} we show how sequence classes can be used to create new hyper-ideals of multilinear operators and new polynomial hyper and two-sided ideals, respectively. In the final Section \ref{section 4} we show that the ideals our technique provides are not composition ideals, establishing that the hyper-ideals and two-sided ideals we obtain are new indeed. 

\section{Basic concepts} \label{section 1}
Throughout the paper, $n$ is a natural number, $E,E_1, \ldots, E_n,F,G, H$ are  Banach spaces over $\mathbb{K} = \mathbb{R}$ or $\mathbb{C}$, ${\cal L}(E_1, \ldots, E_n;F)$ denotes the Banach space of continuous $n$-linear operators from $E_1 \times \cdots \times E_n$ to $F$ and ${\cal P}(^nE;F)$ denotes the Banach space of continuous $n$-homogeneous polynomials from $E$ to $F$, both of them endowed with their usual norms. If $E_1 = \cdots = E_n = E$,  we simply write ${\cal L}(^nE;F)$. The closed unit ball of $E$ is denoted by $B_E$ and its topological dual by $E^*$. Let $\varphi \in E^*, \varphi_1 \in E_1^*, \ldots, \varphi_n \in E_n^*$ and   $b \in F$ be given. Linear combinations of $n$-linear operators of the type
 $$\varphi_1 \otimes \cdots \otimes \varphi_n \otimes b \colon E_1 \times \cdots \times E_n \longrightarrow F~,~(\varphi_1 \otimes \cdots \otimes \varphi_n \otimes b)(x_1, \ldots, x_n) = \varphi_1(x_1) \cdots \varphi_n(x_n)b, $$
 are called multilinear operators of finite type. And linear combinations of $n$-homogeneous polynomials of the type
 $$\varphi^n \otimes b \colon E \longrightarrow F~,~(\varphi^n \otimes b)(x) = \varphi(x)^n b, $$
 are called polynomials of finite type. For the general theory of multilinear operators and homogeneous polynomials we refer to \cite{dineen, mujica} and for operator ideals to \cite{df, livropietsch}.

\begin{definition}\rm \cite{ewertonlaa}  A {\it Banach hyper-ideal of multilinear operators}, or simply a {\it Banach hyper-ideal}, is a pair $({\cal H}, \|\cdot\|_{\cal H})$ where  $\cal H$ is a subclass of  the class of continuous multilinear operators between Banach spaces and $\|\cdot\|_{\cal H} \colon {\cal H} \longrightarrow \mathbb{R}$ is a function such that, for all $n \in \mathbb{N}$ and Banach spaces $E_{1},\ldots,E_{n},F$, the components
$$\mathcal{H}(E_{1},\ldots,E_{n};F) := \mathcal{L}(E_{1},\ldots,E_{n};F) \cap \mathcal{H}$$
fulfill the following conditions:
\newline
$\bullet$ $\mathcal{H}(E_{1},\ldots,E_{n};F)$ is a linear subspace of $\mathcal{L}(E_{1},\ldots,E_{n};F)$ containing the multilinear operators of finite type, on which $\|\cdot\|_{\cal H}$ is a complete norm satisfying
$$\|I_{n}\colon  \mathbb{K}^{n} \longrightarrow \mathbb{K}~,~I_{n}(\lambda_{1},\ldots,\lambda_{n}) = \lambda_{1} \cdots \lambda_{n}\|_{\cal H} = 1.$$
$\bullet$ The hyper-ideal property: given natural numbers $n$ and $1 \leq m_{1} < \cdots < m_{n}$, Banach spaces $G_{1},\ldots,G_{m_{n}}$, $E_{1},\ldots,E_{n}$,$F,H$, if $A \in \mathcal{H}(E_{1},\ldots,E_{n};F)$,
$B_{1} \in \mathcal{L}(G_{1},\ldots,G_{m_{1}};E_{1})$, $\ldots,B_{n} \in \mathcal{L}(G_{m_{n-1} + 1},\ldots,G_{m_{n}};E_{n})$ and $t \in \mathcal{L}(F;H)$, then $t \circ A \circ (B_{1},\ldots,B_{n})$ belongs to  $\mathcal{H}(G_{1},\ldots,G_{m_{n}};H)$ and
$$\|t \circ A \circ (B_{1},\ldots,B_{n})\|_{\mathcal{H}} \leq \|t\| \cdot \|A\|_{\mathcal{H}} \cdot \|B_{1}\| \cdots \|B_{n}\|.$$
\end{definition}

If the hyper-ideal property holds only for $m_1 = 1, m_2 = 2, \ldots, m_n = n$, that is, if the multilinear operators $B_1, \ldots, B_n$ are replaced with linear operators, then we recover the classical notion of Banach ideal of multilinear operators, or Banach multi-ideal (see \cite{fg}).

\begin{definition}\rm Let $\cal Q$ be a subclass of the class of homogeneous polynomials between Banach spaces and $\|\cdot\|_{\cal Q} \colon {\cal Q} \longrightarrow \mathbb{R}$ be a function such that, for all $n \in \mathbb{N}$ and Banach spaces $E,F$, the component
$${\cal Q}(^nE;F) := {\cal P}(^nE;F ) \cap {\cal Q}$$
is a linear subspace of ${\cal P}(^nE;F)$ containing the polynomials of finite type on which $\|\cdot\|_{\cal Q}$ is a complete norm satisfying
$$\|\widehat{I}_{n}\colon  \mathbb{K} \longrightarrow \mathbb{K}~,~\widehat{I}_{n}(\lambda) = \lambda^n\|_{\cal Q} = 1.$$
$\bullet$ \cite{ewertonarxiv, raquelsegundo} Let $(C_{n})_{n=1}^{\infty}$ be a sequence of positive real numbers such that $C_{n} \geq 1$ for every $n \in \mathbb{N}$ and $C_{1} =1$. We say that  $(\mathcal{Q}, \|\cdot\|_{\cal Q})$ is a {\it $(C_{n})_{n=1}^{\infty}$-polynomial Banach hyper-ideal} if the following hyper-ideal property is satisfied:  for $n,m \in \mathbb{N}$, Banach spaces $E, F, G$ and $H$, if $P \in \mathcal{Q}(^{n}E;F)$, $Q \in \mathcal{P}(^{m}G;E)$ and $t \in \mathcal{L}(F;H)$, then $t \circ P \circ Q \in \mathcal{Q}(^{mn}G;H)$ and
$$\|t \circ P \circ Q \|_{\mathcal{Q}} \leq C_m^m \cdot \|t\| \cdot \|P\|_{\mathcal{Q}} \cdot \|Q\|^{n}.$$
If $C_{n} = 1$ for every $n \in \mathbb{N}$, we simply say that $(\mathcal{Q}, \|\cdot\|_{\cal Q})$ is a {\it polynomial Banach hyper-ideal}.

When the hyper-ideal property holds for every $n \in \mathbb{N}$ but only for  $m=1$, the classical concept of Banach polynomial ideal  is recovered (see, e.g., \cite{fg}).\\
$\bullet$ \cite{ewertonjmaa} Let $(C_n,K_n)_{n=1}^\infty$ be a sequence of pairs of positive real numbers with $C_n,K_n \geq 1$ for every $n$ and $C_1 = K_1 = 1$. We say that $({\cal Q}, \|\cdot\|_{\cal Q})$ is a {\it $(C_n,K_n)_{n=1}^\infty$-polynomial Banach two-sided ideal} if the following holds: for  $m,n,r \in \mathbb{N}$, Banach spaces $E$,$F$, $G$ and $H$, if $P \in {\cal Q}(^nE;F)$, $Q \in {\cal P}(^mG;E)$ and $R \in {\cal P}(^rF;H)$, then $R \circ P \circ Q \in {\cal Q}(^{rmn}G;H)$ and
$$\|R \circ P \circ Q\|_{\cal Q} \leq K_r\cdot C_m^{rn}\cdot\|R\| \cdot \|P\|_{\cal Q}^r  \cdot \|Q\|^{rn}. $$
\end{definition}

To define sequence classes we need some terminology. By $(e_j)_{j=1}^\infty$ we denote the canonical vectors of the spaces of scalar-valued sequences. 
 The symbol $E \hookrightarrow F$ means that the Banach space $E$ is a linear subspace of the Banach space $F$ and that $\|x\|_F \leq \|x\|_E$ for every $x \in E$. 
As usual, $c_{00}(E)$ stands for the linear space of eventually null $E$-valued seuences and $\ell_\infty(E)$ for the Banach space of $E$-valued bounded sequences with the supremum norm.

\begin{definition}\rm \cite{jamilsonmonat}
A {\it sequence class} is a rule $E \mapsto X(E)$ that assigns to each Banach space $E$ a Banach space $X(E)$ formed by $E$-valued sequences (of course we are considering the usual coordinatewise algebraic operations) such that 
$c_{00}(E) \subseteq X(E)$, $X(E) \hookrightarrow \ell_{\infty}(E)$ and  $\|e_{j}\|_{X(\mathbb{K})} = 1$ for every $ j \in \mathbb{N}$.


A sequence class $X$ is {\it linearly stable} if for all Banach spaces $E$ and $F$ and any operator $u \in \mathcal{L}(E;F)$, it holds $(u(x_{j}))_{j=1}^{\infty} \in X(F)$ whenever $(x_{j})_{j=1}^{\infty} \in X(E)$ and, in this case, the induced linear operator
$$\widetilde{u} \colon X(E) \longrightarrow X(F)~,~\widetilde{u}\left((x_j)_{j=1}^\infty \right) = (u(x_j))_{j=1}^\infty ,$$
is continuous with $\|\widetilde{u}\| = \|u\|$.
\end{definition}

Plenty of examples of linearly stable sequences classes can be found in the references quoted in the introduction. We just mention some well known classes: the class $E \mapsto c_0(E)$ of norm null sequences, the classes $E \mapsto \ell_p(E)$, $E \mapsto \ell_p^w(E), E \mapsto \ell_p\langle E \rangle, 1 \leq p < \infty$, of absolutely, weakly and strongly Cohen $p$-summable sequences, the class $E \mapsto \ell_\infty(E)$ of bounded sequences and the class $E \mapsto {\rm Rad}(E)$ of almost unconditionally summable sequences.

We also need the following H\"older-type compatibility condition: for sequence classes $X_{1},\ldots,X_{n}$ and $Y$, we write $X_{1}(\mathbb{K}) \cdots X_{n}(\mathbb{K}) \hookrightarrow Y(\mathbb{K})$ if $(\lambda_{j}^{1} \cdots \lambda_{j}^{n})_{j=1}^{\infty} \in Y(\mathbb{K})$ and
$$ \left\| (\lambda_{j}^{1} \cdots \lambda_{j}^{n})_{j=1}^{\infty} \right\|_{Y(\mathbb{K})} \leq \prod\limits_{m=1}^{n} \left\|(\lambda_{j}^{m})_{j=1}^{\infty}\right\|_{X(\mathbb{K})}$$
whenever $(\lambda_{j}^{m})_{j=1}^{\infty} \in X_{m}(\mathbb{K}), ~m=1,\ldots,n$.

In the case $X_1 = \cdots = X_n = X$ we simply write $X(\mathbb{K})^n \hookrightarrow Y(\mathbb{K})$.

\section{Hyper-ideals of multilinear operators}\label{section 2}

In this section we show how sequence classes can produce Banach hyper-ideals. 

\begin{definition}\rm \cite{jamilsonmonat} The sequence classe $X$ is said to be {\it multilinearly stable} if for all $n\in \mathbb{N}$,  Banach spaces $E_1, \ldots, E_n,F$  and  $A \in \mathcal{L}(E_{1},\ldots,E_{n};F)$, $(A(x_{j}^{1},\ldots,x_{j}^{n}))_{j=1}^{\infty} \in  X(F)$ whenever $(x_{j}^{m})_{j=1}^{\infty} \in X(E_{m})$, $m=1,\ldots,n$ and, in this case, the  $n$-linear induced operator
$$\widetilde{A} \colon X(E_{1}) \times \cdots \times X(E_{n}) \longrightarrow X(F)~,~A\left((x_{j}^{1})_{j=1}^\infty,\ldots,(x_{j}^{n})_{j=1}^\infty \right) =  (A(x_{j}^{1},\ldots,x_{j}^{n}))_{j=1}^{\infty},$$
is continuous with $\|\widetilde{A}\| = \|A\|$.
\end{definition}

\begin{example}\label{exmultest}\rm For a Banach space $E$, consider
$${\rm RAD}(E) : = \left\{(x_{j})_{j=1}^{\infty} \in E^{\mathbb{N}} : \|(x_{j})_{j=1}^{\infty}\|_{{\rm RAD}(E)} = \sup\limits_{k} \|(x_{j})_{j=1}^{k}\|_{{\rm Rad}(E)} < \infty\right\}$$
(see \cite{djt, tarieladze}) and the closed subspace $\ell_p^u(E)$ of $\ell_p^w(E)$, $1 \leq p < \infty$, formed by unconditionally $p$-summable sequences, that is
$$\ell_p^u(E) = \left\{(x_j)_{j=1}^\infty \in \ell_p^w(E) : \lim\limits_k \|(x_j)_{j=k}^\infty\|_{w,p} = 0\right\} $$
(see  \cite{df}). The following sequence classes are multilinearly stable: $\ell_{\infty}(\cdot)$, $c_{0}(\cdot)$, ${\rm Rad}(\cdot)$, ${\rm RAD}(\cdot)$, $\ell_1^w(\cdot)$, $\ell_1^u(\cdot)$, $\ell_{p}(\cdot)$, $\ell_{p}\langle \,\cdot\, \rangle$, $1 \leq p < + \infty$ (see \cite{jamilsonmonat}).
\end{example}

Given $n \in \mathbb{N}$, sequence classes $X,Y$ and Banach spaces $E_1, \ldots, E_n,F$, an operator $A \in {\cal L}(E_1, \ldots, E_n;F)$ is said to be {\it $(X;Y)$-summing} if  $(A(x_{j}^{1},\ldots,x_{j}^{n}))_{j=1}^{\infty} \in  Y(F)$ whenever $(x_{j}^{m})_{j=1}^{\infty} \in X(E_{m})$, $m=1,\ldots,n$. In this case we write $A \in {\cal L}_{X_1, \ldots, X_n;Y}(E_1, \ldots, E_n;F)$ and, according to \cite[Proposition 2.4]{jamilsonmonat}, the $n$-linear induced operator
$$\widetilde{A} \colon X(E_{1}) \times \cdots \times X(E_{n}) \longrightarrow Y(F)~,~A\left((x_{j}^{1})_{j=1}^\infty,\ldots,(x_{j}^{n})_{j=1}^\infty \right) =  (A(x_{j}^{1},\ldots,x_{j}^{n}))_{j=1}^{\infty},$$
is continuous. We define $\|A\|_{X;Y}=\|\widetilde{A}\|$.

\begin{theorem} \label{L_XYhiperideal}
Let $X$ and $Y$ be sequence classes with $X$ multilinearly stable, $Y$  linearly stable and $X(\mathbb{K})^n \hookrightarrow Y(\mathbb{K})$ for every $n \in \mathbb{N}$. Then $(\mathcal{L}_{X;Y},\|\cdot\|_{X;Y})$ is a Banach hyper-ideal of multilinear operators.
\end{theorem}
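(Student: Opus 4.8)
The plan is to verify, in order, the two clauses in the definition of a Banach hyper-ideal, using throughout the induced operator $\widetilde{A}$ and the fact that multilinear stability of $X$ includes its linear stability (the case $n=1$). First, each component $\mathcal{L}_{X;Y}(E_1,\dots,E_n;F)$ is a linear subspace of $\mathcal{L}(E_1,\dots,E_n;F)$, because $Y(F)$ is a vector space and $\widetilde{A+A'}=\widetilde{A}+\widetilde{A'}$, $\widetilde{\lambda A}=\lambda\widetilde{A}$ coordinatewise. By linearity it suffices to check that a single finite-type operator $\varphi_1\otimes\cdots\otimes\varphi_n\otimes b$ is $(X;Y)$-summing: for $(x_j^m)_j\in X(E_m)$, linear stability of $X$ gives $(\varphi_m(x_j^m))_j\in X(\mathbb{K})$, the H\"older condition $X(\mathbb{K})^n\hookrightarrow Y(\mathbb{K})$ gives $(\varphi_1(x_j^1)\cdots\varphi_n(x_j^n))_j\in Y(\mathbb{K})$, and linear stability of $Y$ applied to the map $\lambda\mapsto\lambda b$ gives $(\varphi_1(x_j^1)\cdots\varphi_n(x_j^n)\,b)_j\in Y(F)$. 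The quantity $\|A\|_{X;Y}=\|\widetilde{A}\|$ is finite by \cite[Proposition 2.4]{jamilsonmonat}; it is a norm since $A\mapsto\widetilde{A}$ is linear and injective --- if $\widetilde{A}=0$, evaluating at sequences $(x^m,0,0,\dots)\in c_{00}(E_m)$ and using $Y(F)\hookrightarrow\ell_\infty(F)$ forces $A=0$. For the normalization, $\widetilde{I_n}\colon X(\mathbb{K})^n\to Y(\mathbb{K})$ maps a tuple of sequences to their coordinatewise product, so $\|\widetilde{I_n}\|\le 1$ by the H\"older condition, while $\widetilde{I_n}(e_1,\dots,e_1)=e_1$, of $Y(\mathbb{K})$-norm $1=\prod_m\|e_1\|_{X(\mathbb{K})}$; hence $\|I_n\|_{X;Y}=1$.

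The completeness of $\|\cdot\|_{X;Y}$ is the step requiring the most care, and it rests on the estimate $\|A\|\le\|A\|_{X;Y}$: for $x^m\in B_{E_m}$, linear stability of $X$ gives $\|(x^m,0,\dots)\|_{X(E_m)}\le\|x^m\|\le 1$, so $\|(A(x^1,\dots,x^n),0,\dots)\|_{Y(F)}\le\|\widetilde{A}\|$, and $Y(F)\hookrightarrow\ell_\infty(F)$ yields $\|A(x^1,\dots,x^n)\|_F\le\|A\|_{X;Y}$. Consequently a $\|\cdot\|_{X;Y}$-Cauchy sequence $(A_k)$ is Cauchy in $\mathcal{L}(E_1,\dots,E_n;F)$, hence $A_k\to A$ there; simultaneously $(\widetilde{A_k})$ is Cauchy in the Banach space $\mathcal{L}(X(E_1),\dots,X(E_n);Y(F))$, hence converges to some continuous $n$-linear $T$. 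Comparing coordinatewise limits --- $A_k\to A$ pointwise, and $\widetilde{A_k}\to T$ in $Y(F)\hookrightarrow\ell_\infty(F)$ --- shows that for every choice of sequences $(x_j^m)_j\in X(E_m)$ the $i$-th coordinate of $T((x_j^1)_j,\dots,(x_j^n)_j)$ equals $A(x_i^1,\dots,x_i^n)$; thus $T((x_j^m)_j)=(A(x_j^1,\dots,x_j^n))_j\in Y(F)$, so $A\in\mathcal{L}_{X;Y}$, $\widetilde{A}=T$, and $\|A_k-A\|_{X;Y}=\|\widetilde{A_k}-T\|\to 0$.

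It remains to prove the hyper-ideal inequality. Let $A\in\mathcal{L}_{X;Y}(E_1,\dots,E_n;F)$, $B_i\in\mathcal{L}(G_{m_{i-1}+1},\dots,G_{m_i};E_i)$ (with $m_0:=0$) and $t\in\mathcal{L}(F;H)$, and fix $(x_j^l)_j\in X(G_l)$ for $l=1,\dots,m_n$. Multilinear stability of $X$ applied to each $B_i$ gives $(B_i(x_j^{m_{i-1}+1},\dots,x_j^{m_i}))_j\in X(E_i)$ with $X(E_i)$-norm at most $\|B_i\|\prod_{l=m_{i-1}+1}^{m_i}\|(x_j^l)_j\|_{X(G_l)}$; feeding these $n$ sequences into $A$ and using $A\in\mathcal{L}_{X;Y}$ gives a sequence in $Y(F)$ of norm at most $\|A\|_{X;Y}\cdot\|B_1\|\cdots\|B_n\|\cdot\prod_{l=1}^{m_n}\|(x_j^l)_j\|_{X(G_l)}$; and applying $t$, linear stability of $Y$ gives $((t\circ A\circ(B_1,\dots,B_n))(x_j^1,\dots,x_j^{m_n}))_j\in Y(H)$ with norm at most $\|t\|\cdot\|A\|_{X;Y}\cdot\|B_1\|\cdots\|B_n\|\cdot\prod_{l=1}^{m_n}\|(x_j^l)_j\|_{X(G_l)}$. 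Taking the supremum over the unit balls $B_{X(G_l)}$ shows $t\circ A\circ(B_1,\dots,B_n)\in\mathcal{L}_{X;Y}(G_1,\dots,G_{m_n};H)$ with $\|t\circ A\circ(B_1,\dots,B_n)\|_{X;Y}\le\|t\|\cdot\|A\|_{X;Y}\cdot\|B_1\|\cdots\|B_n\|$, completing the proof. The only genuinely delicate point is the completeness argument; every other step is a direct application of one of the stability hypotheses or of the H\"older-type condition.
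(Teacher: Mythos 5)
Your argument is correct, and for the only part of the statement that the paper actually proves --- the hyper-ideal property --- it is essentially the same argument: compose the induced maps $\widetilde{B_1},\dots,\widetilde{B_n}$, $\widetilde{A}$, $\widetilde{t}$, using multilinear stability of $X$, the $(X;Y)$-summability of $A$ and linear stability of $Y$, and read off the norm estimate. The difference is that the paper disposes of all the remaining requirements (linear subspace, finite-type operators, completeness of $\|\cdot\|_{X;Y}$, $\|I_n\|_{X;Y}=1$) in one stroke by citing \cite[Theorem~3.6]{jamilsonmonat}, which says that $(\mathcal{L}_{X;Y},\|\cdot\|_{X;Y})$ is already a Banach multi-ideal under these hypotheses, whereas you reprove that structure from scratch. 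Your self-contained treatment is sound: the finite-type case via linear stability of $X$, the H\"older condition and linear stability of $Y$ is exactly the standard argument; the estimate $\|(x,0,0,\dots)\|_{X(E)}\le\|x\|$ you invoke does follow from linear stability applied to $\lambda\mapsto\lambda x$ together with $\|e_1\|_{X(\mathbb{K})}=1$; and your completeness proof (dominating the usual norm by $\|\cdot\|_{X;Y}$, passing to the limit of the induced operators and matching coordinates through $Y(F)\hookrightarrow\ell_\infty(F)$) is the same scheme used in the cited reference. So what you buy is independence from the external theorem at the cost of length; what the paper buys is brevity, concentrating only on the genuinely new hyper-ideal estimate.
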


\begin{proof} By \cite[Theorem 3.6]{jamilsonmonat} we know that ${\cal L}_{X;Y}$ is a Banach multi-ideal, so the hyper-ideal property is all that is left to be proved.  To do so, let $B_{1} \in \mathcal{L}(G_{1},\ldots,G_{m_{1}};E_{1}), \ldots, B_{n} \in \mathcal{L}(G_{m_{n-1}+1},\ldots,G_{m_{n}};E_{n})$, $t \in \mathcal{L}(F;H)$ and $A \in \mathcal{L}_{X;Y}(E_{1},\ldots,E_{n};F)$. Given sequences $(x_{j}^{k})_{j=1}^{\infty} \in X(G_{k})$, $k=1,\ldots,m_{n}$, the multilinear stability of $X$ gives
$$\left(B_1(x_{j}^{1}, \ldots, x_{j}^{m_1}) \right)_{j=1}^\infty \in X(E_1), \ldots,  \left(B_n(x_{j}^{m_{n-1}+1}, \ldots, x_{j}^{m_n}) \right)_{j=1}^\infty \in X(E_n) $$
and
$$\|\widetilde{B_1} \colon X(G_1) \times \cdots \times X(G_{m_1})\longrightarrow E_1\|= \|B_1\|, \ldots, $$
$$\|\widetilde{B_n} \colon X(G_{m_{n-1}+1}) \times \cdots \times X(G_{m_n})\longrightarrow E_n\|= \|B_n\|. $$
Since $A$ is $(X;Y)$-summing,
$$\left(A\left(B_1(x_{j}^{1}, \ldots, x_{j}^{m_1}), \ldots, B_n(x_{j}^{m_{n-1}+1}, \ldots, x_{j}^{m_n}) \right)\right)_{j=1}^\infty \in Y(F)$$
and
$$\|\widetilde{A} \colon X(E_1)\times \cdots X(E_n) \longrightarrow Y(F) \| = \|A\|_{X;Y}. $$
Finally the linear stability of $Y$  yields $\|\widetilde{t} \colon Y(F) \longrightarrow Y(H) = \|t\|$ and
$$\left((t \circ A \circ (B_1, \ldots, B_n)\left( x_j^1, \ldots, x_j^{m_n}\right) \right)_{j=1}^\infty=$$
$$= \left(t\left(A\left(B_1(x_{j}^{1}, \ldots, x_{j}^{m_1}), \ldots, B_n(x_{j}^{m_{n-1}+1}, \ldots, x_{j}^{m_n}) \right)\right)\right)_{j=1}^\infty \in Y(H). $$
Therefore,
$(t \circ A \circ (B_{1},\ldots,B_{n})) \in \mathcal{L}_{X;Y}(G_1, \ldots, G_{m_n};H)$ and  
\begin{align*} \|t \circ A \circ (B_{1},\ldots,B_{n})\|_{X;Y} & = \|[t \circ A \circ (B_{1},\ldots,B_{n})]^{\sim}\| = \| \widetilde{t} \circ \widetilde{A} \circ (\widetilde{B_1}, \ldots, \widetilde{B_n})\| \\
& \leq \| \widetilde{t}\|\cdot \| \widetilde{A} \|\cdot \| \widetilde{B_1}\| \cdots  \|\widetilde{B_n}\| = \|t\| \cdot \|A\|_{X;Y} \cdot \|B_{1}\| \cdots \|B_{n}\|.
\end{align*}
\end{proof}

Before giving concrete examples, let us see a property of $\mathcal{L}_{X;Y}$ thas has gone unnoticed and will be helpful later.

By $S_n$ we denote the set of permutations of the set $\{1, \ldots, n\}$. According to Floret and Garc\'ia \cite{fg}, a normed subclass $(\mathcal{G},\|\cdot\|_{\mathcal{G}})$ of the class of multilinear operators between Banach spaces is {\it strongly symmetric} if for all Banach spaces $E$ and $F$, $n \in \mathbb{N}$, a permutation $\sigma \in S_n$ and $A\in \mathcal{G}(^{n}E;F)$, it holds $A_\sigma \in \mathcal{G}(^{n}E;F)$  and $\|A_\sigma\|_{\mathcal{G}} = \|A\|_{\mathcal{G}}$, where
$$A_\sigma(x_1, \ldots, x_n) = A(x_{\sigma(1)}, \ldots, x_{\sigma(n)}).$$

\begin{proposition} \label{L_XYfortSimetrico}
For all sequence classes $X$ and $Y$,  the normed class $(\mathcal{L}_{X;Y},\|\cdot\|_{X;Y})$  of multilinear operators is strongly symmetric.
\end{proposition}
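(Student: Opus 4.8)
The plan is to reduce the statement to the Hölder-type inequality for the symmetrized operator by tracking what happens to the induced operator $\widetilde{A}$ under a permutation of the arguments. First I would fix Banach spaces $E$ and $F$, a natural number $n$, a permutation $\sigma \in S_n$ and an operator $A \in \mathcal{L}_{X;Y}(^nE;F)$. I want to show that $A_\sigma$ is again $(X;Y)$-summing with $\|A_\sigma\|_{X;Y} = \|A\|_{X;Y}$. The key observation is that if $(x_j^1)_{j=1}^\infty, \ldots, (x_j^n)_{j=1}^\infty \in X(E)$, then
$$A_\sigma(x_j^1, \ldots, x_j^n) = A(x_j^{\sigma(1)}, \ldots, x_j^{\sigma(n)}),$$
so the sequence $\left(A_\sigma(x_j^1, \ldots, x_j^n)\right)_{j=1}^\infty$ coincides with the sequence obtained by feeding $A$ the permuted tuple of sequences. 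Since each $(x_j^{\sigma(m)})_{j=1}^\infty$ is still in $X(E)$ (we are merely relabelling which input slot each sequence occupies), the summing property of $A$ gives that this sequence belongs to $Y(F)$; hence $A_\sigma$ is $(X;Y)$-summing.

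Next I would identify the induced operators. At the level of the induced multilinear maps $\widetilde{A}, \widetilde{A_\sigma} \colon X(E)^n \to Y(F)$, the computation above says precisely that
$$\widetilde{A_\sigma}\left((x_j^1)_{j=1}^\infty, \ldots, (x_j^n)_{j=1}^\infty\right) = \widetilde{A}\left((x_j^{\sigma(1)})_{j=1}^\infty, \ldots, (x_j^{\sigma(n)})_{j=1}^\infty\right),$$
that is, $\widetilde{A_\sigma} = (\widetilde{A})_\sigma$ as elements of $\mathcal{L}(^nX(E);Y(F))$. Since permuting the arguments of a bounded multilinear operator between fixed Banach spaces is an isometry of the space $\mathcal{L}(^nX(E);Y(F))$ — the supremum defining the norm runs over the unit ball $B_{X(E)}^n$, which is invariant under permutation of coordinates — we get $\|\widetilde{A_\sigma}\| = \|(\widetilde{A})_\sigma\| = \|\widetilde{A}\|$. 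By the definition $\|A\|_{X;Y} = \|\widetilde{A}\|$ this is exactly $\|A_\sigma\|_{X;Y} = \|A\|_{X;Y}$, which completes the argument.

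There is no real obstacle here: the statement is essentially a bookkeeping fact about how the assignment $A \mapsto \widetilde{A}$ interacts with permutations, and the only thing to be careful about is the distinction between permuting the \emph{slots} of $A$ (which is what $A_\sigma$ does) versus permuting the \emph{sequences} fed into $\widetilde{A}$, and checking that these two operations produce the same $E$-valued, hence $Y(F)$-valued, sequence coordinatewise. The mild subtlety worth spelling out is that strong symmetry is stated only for the diagonal case $E_1 = \cdots = E_n = E$, so one does not even need to worry about the domain spaces changing under $\sigma$; the whole argument takes place with a single space $X(E)$ in all input slots, and the permutation-invariance of the ball $B_{X(E)}^n$ does the rest.
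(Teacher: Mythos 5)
Your argument is correct and follows essentially the same route as the paper: you establish the identity $\widetilde{A_\sigma} = (\widetilde{A})_\sigma$ coordinatewise, deduce from it that $\widetilde{A_\sigma}$ takes values in $Y(F)$ (so $A_\sigma \in \mathcal{L}_{X;Y}(^nE;F)$), and then use the permutation-invariance of the norm on $\mathcal{L}(^nX(E);Y(F))$ to get $\|A_\sigma\|_{X;Y} = \|A\|_{X;Y}$, which is precisely the paper's proof. No gaps.
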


\begin{proof} Given $T \in {\cal L}(^nG;H)$ and $\sigma \in S_n$, from the obvious equality
$$\{\|T(z_{\sigma(1)}, \ldots, z_{\sigma(n)})\|: z_1, \ldots, z_n \in B_G\} = \{\|T(z_1, \ldots, z_n)\|: z_1, \ldots, z_n \in B_G\} $$
it follows easily that $\|T\| = \|T_\sigma\|$.

Let $n \in \mathbb{N}$, Banach spaces $E,F$, $\sigma \in S_{n}$ and $A \in \mathcal{L}_{X;Y}(^{n}E;F)$ be given. Then the induced operator $\widetilde{A} \colon X(E)^{n} \longrightarrow Y(F)$ 
is well defined, $n$-linear and continuous, so is its permutation 
$ (\widetilde{A})_{\sigma} \colon X(E)^{n} \longrightarrow Y(F).$
We can also consider the operator induced by $A_{\sigma}$, namely $ \widetilde{A_{\sigma}} \colon X(E)^{n} \longrightarrow F^{\mathbb{N}}.$ A standard computation shows that
\begin{equation}\label{noviguald}
(\widetilde{A} )_\sigma = \widetilde{A_\sigma},
\end{equation}
from which it follows that $\widetilde{A_\sigma}$ takes values in $Y(F)$, proving that $A_\sigma \in \mathcal{L}_{X;Y}(^{n}E;F)$. Furthermore,
$$\|A\|_{X;Y} = \|\widetilde{A}\|_{{\cal L}(^nX(E);Y(F))} =  \|(\widetilde{A})_\sigma\|_{{\cal L}(^nX(E);Y(F))} = \|\widetilde{A_\sigma}\|_{{\cal L}(^nX(E);Y(F))} = \|A_\sigma\|_{X;Y} , $$
where the second equality follows from the begining of the proof and the third from (\ref{noviguald}).
\end{proof}

      Now we apply Theorem \ref{L_XYhiperideal} to show that several well studied Banach multi-ideals are actually Banach hyper-ideals. We shall use the obvious fact that every multilinearly stable sequence class is linearly stable. 

\begin{example}\rm Absolutely summing multilinear operators were introduced in   \cite{alencarmatos} and then extensively studied by a number of authors (see, for instance, the references in \cite{davidson}). By definition, an operator $A \in {\cal L}(E_1, \ldots, E_n;F)$ is absolutely summing if  $(A(x_{j}^{1},\ldots,x_{j}^{n}))_{j=1}^{\infty} \in  \ell_1(F)$ whenever $(x_{j}^{m})_{j=1}^{\infty} \in \ell_1^w(E_{m})$, $m=1,\ldots,n$. Since the sequence classes $\ell_1(\cdot)$ and $\ell_{1}^{w}(\cdot)$ are multilinearly stable (Example \ref{exmultest}), Theorem \ref{L_XYhiperideal} assures that the multi-ideal of absolutely summing operators is actually a Banach hyper-ideal. 
\end{example}

\begin{example}\rm Weakly summing multilinear operators were studied in  \cite{jamilsonmonat, kim, popaillinois, tesecarlao} among others. A well known characterization says that an operator $A \in {\cal L}(E_1, \ldots, E_n;F)$ is weakly summing if and only if $(A(x_{j}^{1},\ldots,x_{j}^{n}))_{j=1}^{\infty} \in  \ell_1^w(F)$ whenever $(x_{j}^{m})_{j=1}^{\infty} \in \ell_1^w(E_{m})$, $m=1,\ldots,n$. Since the sequence class $\ell_{1}^{w}(\cdot)$ is multilinearly stable (Example \ref{exmultest}), Theorem \ref{L_XYhiperideal} gives that the multi-ideal of weakly summing operators is actually a Banach hyper-ideal.
\end{example}

\begin{example}\rm Multilinear operators of type $p$, $1 \leq p \leq 2$, and of cotype $q$, $2 \leq q < +\infty$,  were investigated in \cite{jamilsoncomplut, davidson}. Let $A \in {\cal L}(E_1, \ldots, E_n;F)$. As proved in  \cite{jamilsoncomplut}, $A$ has type   $p$ if and only if $(A(x_{j}^{1},\ldots,x_{j}^{n}))_{j=1}^{\infty} \in  {\rm Rad}(F)$ whenever $(x_{j}^{m})_{j=1}^{\infty} \in \ell_p(E_{m})$, $m=1,\ldots,n$; and $A$ has cotype $q$ if and only if $(A(x_{j}^{1},\ldots,x_{j}^{n}))_{j=1}^{\infty} \in  \ell_q(F)$ whenever $(x_{j}^{m})_{j=1}^{\infty} \in {\rm Rad}(E_{m})$, $m=1,\ldots,n$. 
Since the sequence classes $\ell_p(\cdot), \ell_q(\cdot)$ and ${\rm Rad}(\cdot)$ are multilinearly stable (Example \ref{exmultest}), from Theorem \ref{L_XYhiperideal} we conclude that the multi-ideals of operators of type $p$ and of operators of cotype $q$ are actually Banach hyper-ideals.
 \end{example}

\begin{example}\rm Cohen almost summing multilinear operators were introduced in  \cite{bushiJMAA2013}. An operator $A \in {\cal L}(E_1, \ldots, E_n;F)$ is Cohen almost summing  if and only if $(A(x_{j}^{1},\ldots,x_{j}^{n}))_{j=1}^{\infty} \in  \ell_2\langle F \rangle$ whenever $(x_{j}^{m})_{j=1}^{\infty} \in {\rm Rad}(F)$, $m=1,\ldots,n$. Since the sequence classes ${\rm Rad}(\cdot)$ and $\ell_2\langle\cdot \rangle$ are multilinearly stable (Example \ref{exmultest}), from Theorem \ref{L_XYhiperideal} it follows that the multi-ideal of Cohen almost summing operators is actually a Banach hyper-ideal.
\end{example}

Of course multilinearly stable and linearly stable sequence classes can be combined to produce Banach hyper-ideals that have not been studied yet. We finish this section with an illustrative example.

\begin{example}\rm \label{llexe}\rm Given $1 \leq p < \infty$, since the sequence classes $\ell_p(\cdot)$ and $\ell_p\langle \cdot \rangle$ are multilinearly stable, from Theorem \ref{L_XYhiperideal} we have that the class ${\cal L}_{\ell_p(\cdot); \ell_p\langle \cdot \rangle}$ is a Banach hyper-ideal. In the linear case, this class coincides with the Banach operator ideal of Cohen strongly $p$-summing operators, but in the multilinear case it does not coincide with the well studied class of Cohen strongly $p$-summing operators (see \cite{achourmezrag,  bushiJMAA2013, jamilsonlaa2013, jamilsonlama2014, mezragsaadi, mezragsaadi2, popa}). In fact, according to \cite{jamilsonlama2014}, an $n$-linear operator is Cohen strongly $p$-summing if and only if it sends sequences in $\ell_{np}(\cdot)$ to sequences in $\ell_p\langle \cdot \rangle$, and not sequences in $\ell_{p}(\cdot)$ to sequences in $\ell_p\langle \cdot \rangle$.
\end{example}

\section{Polynomial hyper and two-sided ideals} \label{section 3}

Classes of polynomials defined by the transformation of vector-valued sequences were treated in \cite{achourmediterranean}, but the connection with the corresponding classes of multilinear operators was not investigated there. We do it now. As usual, by $\widehat{A}$ we mean the $n$-homogeneous polynomial defined by the $n$-linear operator $A$ and by $\check{P}$ the symmetric $n$-linear operator associated to the $n$-homogeneous polynomial $P$.

\begin{theorem} \label{equivhippol} Let $X$ and $Y$ be sequence classes. The following are equivalent for a polynomial $P \in \mathcal{P}(^{n}E;F)$:\\
{\rm (a)} $\check{P} \in \mathcal{L}_{X;Y}(^{n}E;F)$.\\
{\rm (b)} There exists $A \in \mathcal{L}_{X;Y}(^{n}E;F)$ such that $\widehat{A} = P$.\\
{\rm (c)} $(P(x_{j}))_{j=1}^{\infty} \in Y(F)$ whenever $(x_{j})_{j=1}^{\infty} \in X(E)$.\\
{\rm (d)} The induced operator 
$$ \widetilde{P} \colon X(E) \longrightarrow Y(F)~,~\widetilde{P}((x_{j})_{j=1}^{\infty}) = (P(x_{j}))_{j=1}^{\infty},$$
is a well defined continuous $n$-homogeneous polynomial.

If  the conditions above hold, then
$$\|\widetilde{P}\| \leq \|\check{P}\|_{X;Y} = \inf\{\|A\|_{X;Y} : A \in {\cal L}_{X;Y} ~{\rm e~}\widehat{A} = P\} \leq \frac{n^n}{n!} \|\widetilde{P}\|.$$
\end{theorem}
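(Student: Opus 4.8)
The plan is to prove the cycle of implications (a) $\Rightarrow$ (b) $\Rightarrow$ (c) $\Rightarrow$ (d) $\Rightarrow$ (a), and then extract the norm inequalities along the way. The implication (a) $\Rightarrow$ (b) is immediate: take $A = \check{P}$, since $\widehat{\check{P}} = P$ by definition. For (b) $\Rightarrow$ (c), given $A \in \mathcal{L}_{X;Y}(^nE;F)$ with $\widehat{A} = P$ and a sequence $(x_j)_{j=1}^\infty \in X(E)$, the $(X;Y)$-summing property of $A$ applied to the constant choice of entries $x_j^1 = \cdots = x_j^n = x_j$ gives $(A(x_j,\ldots,x_j))_{j=1}^\infty = (P(x_j))_{j=1}^\infty \in Y(F)$. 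For (c) $\Rightarrow$ (d), one first checks that $\widetilde{P}$ is well defined by (c); that it is $n$-homogeneous is clear coordinatewise ($\widetilde{P}(\lambda(x_j)) = (\lambda^n P(x_j)) = \lambda^n \widetilde{P}((x_j))$, using that scalar multiplication in $X(E)$ is coordinatewise); continuity should follow from a closed graph argument, exploiting that $X(E) \hookrightarrow \ell_\infty(E)$ and $Y(F) \hookrightarrow \ell_\infty(F)$ so that coordinate evaluations are continuous, hence if $(x^{(k)}) \to x$ in $X(E)$ and $\widetilde{P}(x^{(k)}) \to y$ in $Y(F)$ then coordinatewise $y_j = P(x_j)$, i.e. $y = \widetilde{P}(x)$. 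For (d) $\Rightarrow$ (a), the key point is to realize $\widetilde{\check{P}}$ as the symmetric multilinear operator associated to the polynomial $\widetilde{P}$; since $X(E)$ is a Banach space and $\widetilde{P} \colon X(E) \to Y(F)$ is a continuous $n$-homogeneous polynomial, its associated symmetric $n$-linear operator $(\widetilde{P})^{\vee} \colon X(E)^n \to Y(F)$ is continuous, and a coordinatewise computation using the polarization formula shows $(\widetilde{P})^{\vee} = \widetilde{\check{P}}$ (both send $((x_j^1),\ldots,(x_j^n))$ to $(\check{P}(x_j^1,\ldots,x_j^n))_j$, the left side landing in $Y(F)$). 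This gives that $\check{P}$ is $(X;Y)$-summing.

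For the norm statement, I would argue as follows. The first inequality $\|\widetilde{P}\| \leq \|\check{P}\|_{X;Y}$ comes from the fact, just established, that $(\widetilde{P})^{\vee} = \widetilde{\check{P}}$, combined with the standard estimate $\|\widehat{B}\| \leq \|B\|$ for any multilinear $B$: here $\|\widetilde{P}\| = \|\widehat{(\widetilde{P})^{\vee}}\| \leq \|(\widetilde{P})^{\vee}\| = \|\widetilde{\check{P}}\| = \|\check{P}\|_{X;Y}$. For the middle equality, denote the infimum by $I$. That $I \leq \|\check{P}\|_{X;Y}$ is trivial since $\check{P}$ itself is one of the competitors (with $\widehat{\check{P}} = P$). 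The reverse, $\|\check{P}\|_{X;Y} \leq I$, requires Proposition \ref{L_XYfortSimetrico}: if $A \in \mathcal{L}_{X;Y}(^nE;F)$ satisfies $\widehat{A} = P$, then $\check{P}$ is the symmetrization of $A$, namely $\check{P} = \frac{1}{n!}\sum_{\sigma \in S_n} A_\sigma$; strong symmetry gives $\|A_\sigma\|_{X;Y} = \|A\|_{X;Y}$ for each $\sigma$, so by the triangle inequality in the component norm $\|\check{P}\|_{X;Y} \leq \frac{1}{n!}\sum_{\sigma} \|A_\sigma\|_{X;Y} = \|A\|_{X;Y}$; taking the infimum over $A$ yields $\|\check{P}\|_{X;Y} \leq I$. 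Finally, the last inequality $\|\check{P}\|_{X;Y} \leq \frac{n^n}{n!}\|\widetilde{P}\|$ follows again from $\widetilde{\check{P}} = (\widetilde{P})^{\vee}$ together with the classical polarization constant bound $\|B^{\vee}\| \leq \frac{n^n}{n!}\|\widehat{B}\|$ applied to $B^{\vee} = \widetilde{P}$, giving $\|\check{P}\|_{X;Y} = \|\widetilde{\check{P}}\| = \|(\widetilde{P})^{\vee}\| \leq \frac{n^n}{n!}\|\widetilde{P}\|$.

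The step I expect to be the main obstacle is verifying the identity $(\widetilde{P})^{\vee} = \widetilde{\check{P}}$ and, relatedly, that the induced operator of $\check{P}$ on $X(E)^n$ actually lands in $Y(F)$ — i.e. that the polarization formula can be applied coordinatewise inside the sequence spaces. Concretely, one must check that the sequence $\left(\frac{1}{2^n n!}\sum_{\varepsilon_i = \pm 1}\varepsilon_1\cdots\varepsilon_n\, P\!\left(\sum_i \varepsilon_i x_j^i\right)\right)_{j=1}^\infty$, which equals $(\check{P}(x_j^1,\ldots,x_j^n))_j$ coordinatewise, belongs to $Y(F)$ whenever each $(x_j^i)_j \in X(E)$; this uses that $\left(\sum_i \varepsilon_i x_j^i\right)_j \in X(E)$ (by linearity/vector space structure of $X(E)$) and then condition (d) together with the fact that $Y(F)$ is a linear subspace closed under the finitely many operations in the polarization sum. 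Care is also needed that the closed graph argument in (c) $\Rightarrow$ (d) is applicable — it is, since $X(E)$ and $Y(F)$ are Banach spaces and a continuous-on-bounded-sequences argument or the closed graph theorem for polynomials both work, using the continuity of coordinate functionals coming from the embeddings into $\ell_\infty$. The remaining manipulations (homogeneity, the triangle inequalities, and the standard polarization constants) are routine.
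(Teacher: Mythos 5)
Your treatment of the norm chain coincides with the paper's proof: the paper also gets $\|\widetilde{P}\|\leq\|A\|_{X;Y}$ from $\widetilde{P}=\widehat{(\widetilde{A})}$, obtains the middle equality by symmetrizing an arbitrary $A$ with $\widehat{A}=P$ and invoking the strong symmetry of $\mathcal{L}_{X;Y}$ (Proposition \ref{L_XYfortSimetrico}) together with the triangle inequality, and gets the last inequality from the symmetry of $\widetilde{\check{P}}$ and the polarization constant $n^n/n!$. For the equivalences the paper gives no argument at all (it cites \cite[Proposition 2.6]{achourmediterranean} and ``standard arguments''), so your cycle (a)$\Rightarrow$(b)$\Rightarrow$(c)$\Rightarrow$(d)$\Rightarrow$(a) is more detailed than the source; the coordinatewise polarization argument you single out as the main obstacle is indeed the crux, and your handling of it is correct.

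One wrinkle in the way you ordered the cycle: in (c)$\Rightarrow$(d) you treat $\widetilde{P}$ as an $n$-homogeneous polynomial and propose a closed graph argument for its continuity, but at that stage you only know the scaling identity $\widetilde{P}(\lambda x)=\lambda^{n}\widetilde{P}(x)$, which does not by itself make $\widetilde{P}$ a polynomial, and a closed graph theorem for homogeneous polynomials between Banach spaces is not an off-the-shelf tool (the multilinear closed graph theorem, via separate continuity, is). The clean repair is already contained in your own (d)$\Rightarrow$(a) step and your ``obstacle'' paragraph: prove (c)$\Rightarrow$(a) first, i.e.\ use the coordinatewise polarization formula and the linearity of $X(E)$ and $Y(F)$ to see that $\widetilde{\check{P}}\colon X(E)^{n}\to Y(F)$ is well defined; then its continuity is the multilinear statement (\cite[Proposition 2.4]{jamilsonmonat}), and (d) follows because $\widetilde{P}=\widehat{\bigl(\widetilde{\check{P}}\bigr)}$. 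With that reordering (and fixing the harmless slip ``applied to $B^{\vee}=\widetilde{P}$'', which should read $\widehat{B}=\widetilde{P}$, i.e.\ $B=\widetilde{\check{P}}$), the argument is complete and matches the paper's.
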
 

\begin{proof} Some of the equivalences can be found in the proof of \cite[Proposition 2.6]{achourmediterranean}. The remaining ones can be proved by standard arguments. We focus on the relationships between the norms. Let $A \in \mathcal{L}_{X;Y}(^{n}E;F)$ be such that $\widehat{A} = P$. For every sequence $(x_j)_{j=1}^\infty \in X(E)$,
$$\widehat{\LARGE{(}{\widetilde{A}}\LARGE{)}}\left((x_j)_{j=1}^\infty  \right) = {\widetilde{A}}\left((x_j)_{j=1}^\infty, \ldots, (x_j)_{j=1}^\infty  \right) = \left(A (x_j, \ldots, x_j)\right)_{j=1}^\infty = \left(P (x_j)\right)_{j=1}^\infty= \widetilde{P}\left( (x_j)_{j=1}^\infty\right),$$
that is, $\widetilde{P} = \widehat{\LARGE{(}{\widetilde{A}}\LARGE{)}}$. So, 
$$\|\widetilde{P}\| = \mbox{\large{$\|$}}\widehat{\LARGE{(}{\widetilde{A}}\LARGE{)}}\mbox{\large{$\|$}} \leq \|\widetilde{A}\| = \|A\|_{X;Y}. $$
Taking the infimum over all such $A$ and using that $\widehat{\check{P}} = P$ we get
$$\|\widetilde{P}\| \leq \inf\{\|A\|_{X;Y} : A \in {\cal L}_{X;Y} ~{\rm e~}\widehat{A} = P\} \leq \|\check{P}\|_{X;Y}. $$
Take again  $A \in \mathcal{L}_{X;Y}(^{n}E;F)$ such that $\widehat{A} = P$. Denoting by $A_s$ its symmetrization, that is, $A = \frac{1}{n!} \cdot \sum\limits_{\sigma \in S_n} A_\sigma$, we have $A_s = \check{P}$. By Proposition \ref{L_XYfortSimetrico} we know that ${\cal L}_{X;Y}$ is completely symmetric, hence $A_s \in {\cal L}_{X;Y}(^nE;F)$ and $\|A_\sigma\|_{X_;Y} = \|A\|_{X;Y}$ for every $\sigma \in S_n$. Therefore,
$$\|\check{P}\|_{X;Y} = \|A_s\|_{X;Y} = \left\|\frac{1}{n!}\cdot \sum\limits_{\sigma \in S_{n}} A_{\sigma}\right\|_{X;Y}\leq \frac{1}{n!}\cdot \sum\limits_{\sigma \in S_{n}} \|A_{\sigma}\|_{X;Y} = \|A\|_{X;Y}, $$
from which we conclude that $\inf\{\|A\|_{X;Y} : A \in {\cal L}_{X;Y} ~{\rm e~}\widehat{A} = P\} = \|\check{P}\|_{X;Y}$.

For the remaining inequality, note that we proved above that $\widetilde{P} = \widehat{\LARGE{(}{\widetilde{A}}\LARGE{)}}$ for every $A \in \mathcal{L}_{X;Y}(^{n}E;F)$ such that $\widehat{A} = P$. Since $\widehat{\check{P}} = P$  and $\check{P} \in \mathcal{L}_{X;Y}(^{n}E;F)$, we have  $\widetilde{P} = \widehat{\LARGE{(}\widetilde{\check{P}}\LARGE{)}}$. The symmetry of $\check{P}$ implies easily the symmetry of its induced $n$-linear operator $\widetilde{\check{P}}$, so \cite[Theorem 2.2]{mujica} gives
$$\|\check{P}\|_{X;Y}= \mbox{\large{$\|$}}\widetilde{\check{P}} \mbox{\large{$\|$}} \leq \frac{n^n}{n!} \cdot \mbox{\large{$\|$}}\widehat{\Large{(}\widetilde{\check{P}}\Large{)}} \mbox{\large{$\|$}} = \frac{n^n}{n!} \cdot \|\widetilde{P}\|. $$
\end{proof}

Now we can define $(X;Y)$-summing polynomials.

\begin{definition}\label{deffed}\rm
 Given sequence classes $X$ and $Y$, we say that a polynomial $P \in \mathcal{P}(^{n}E;F)$ is {\it $(X;Y)$-summing}, in symbols $P \in \mathcal{P}_{X;Y}(^{n}E;F)$, if  the equivalent conditions of the previous theorem hold for $P$. In this case we define
$$\|P\|_{X;Y;1} = \|\widetilde{P}\| \mbox{~~and~~} \|P\|_{X;Y;2} = \|\check{P}\|_{X;Y}. $$
\end{definition}

\begin{theorem}\label{P_XYhiperideal}
Let $X$ be a multilinearly stable sequence class and $Y$ be a linearly stable sequence class such that $X(\mathbb{K})^n \hookrightarrow Y(\mathbb{K})$ for every $n \in \mathbb{N}$. Then $(\mathcal{P}_{X;Y}, \|\cdot\|_{X;Y;1})$ and $(\mathcal{P}_{X;Y}, \|\cdot\|_{X;Y;2})$ are $\left( \frac{n^n}{n!}\right)_{n=1}^\infty$-polynomial Banach hyper-ideals.
\end{theorem}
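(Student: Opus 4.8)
The plan is to deduce everything from what has already been established about the multilinear ideal $\mathcal{L}_{X;Y}$ (Theorems \ref{L_XYhiperideal} and \ref{equivhippol}, Proposition \ref{L_XYfortSimetrico}), using the correspondence $P \leftrightarrow \check{P}$. I would first treat the norm $\|\cdot\|_{X;Y;2}$, for which the identification of $\mathcal{P}_{X;Y}(^{n}E;F)$ with the symmetric members of $\mathcal{L}_{X;Y}(^{n}E;F)$ is isometric, and then transfer the conclusions to $\|\cdot\|_{X;Y;1}$ through the estimate $\|\widetilde{P}\| \le \|\check{P}\|_{X;Y} \le \frac{n^{n}}{n!}\,\|\widetilde{P}\|$ of Theorem \ref{equivhippol}, which makes the two norms equivalent on each component.

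For the normed-ideal axioms: linearity of $\mathcal{P}_{X;Y}(^{n}E;F)$ is immediate from characterization (c) of Theorem \ref{equivhippol} (since $Y(F)$ is a vector space), and then $\widetilde{P + \lambda R} = \widetilde{P} + \lambda \widetilde{R}$ shows $\|\cdot\|_{X;Y;1}$ is a seminorm, in fact a norm because $c_{00}(E) \subseteq X(E)$ forces $\widetilde{P} = 0 \Rightarrow P = 0$. A finite type polynomial $\varphi^{n} \otimes b$ has symmetric associated operator $\varphi \otimes \cdots \otimes \varphi \otimes b$, a finite type multilinear operator, hence a member of $\mathcal{L}_{X;Y}$ (a Banach multi-ideal by Theorem \ref{L_XYhiperideal}); by characterization (a) this puts $\varphi^{n} \otimes b$ in $\mathcal{P}_{X;Y}$, and linearity yields all finite type polynomials. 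For completeness of $\|\cdot\|_{X;Y;2}$ I would note that $P \mapsto \check{P}$ is a linear isometry of $(\mathcal{P}_{X;Y}(^{n}E;F),\|\cdot\|_{X;Y;2})$ onto the subspace of symmetric operators of the Banach space $\mathcal{L}_{X;Y}(^{n}E;F)$; that subspace is closed, being the range of the symmetrization $A \mapsto \frac{1}{n!}\sum_{\sigma \in S_{n}} A_{\sigma}$, which is a norm-one projection on $\mathcal{L}_{X;Y}(^{n}E;F)$ by Proposition \ref{L_XYfortSimetrico}. Completeness of $\|\cdot\|_{X;Y;1}$ is then automatic from its equivalence with $\|\cdot\|_{X;Y;2}$. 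For the normalization, the polynomial induced by $\widehat{I}_{n}(\lambda)=\lambda^{n}$ is $(\lambda_{j})_{j}\mapsto(\lambda_{j}^{n})_{j}$ from $X(\mathbb{K})$ to $Y(\mathbb{K})$, so $X(\mathbb{K})^{n}\hookrightarrow Y(\mathbb{K})$ gives norm $\le 1$, while evaluating at $e_{1}$ and using $\|e_{1}\|_{X(\mathbb{K})}=\|e_{1}\|_{Y(\mathbb{K})}=1$ gives norm $\ge 1$; since $\check{\widehat{I}_{n}}=I_{n}$ is the product map $\mathbb{K}^{n}\to\mathbb{K}$, the same estimate and test vectors give $\|\widehat{I}_{n}\|_{X;Y;2}=1$ as well.

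It remains to verify the hyper-ideal property, which I expect to obtain cleanly rather than to be the hard part. Given $P \in \mathcal{P}_{X;Y}(^{n}E;F)$, $Q \in \mathcal{P}(^{m}G;E)$ and $t \in \mathcal{L}(F;H)$, I would form the $mn$-linear operator $B := t \circ \check{P} \circ (\check{Q},\ldots,\check{Q})$ with $n$ copies of $\check{Q}$; since $\check{P} \in \mathcal{L}_{X;Y}$ and $\mathcal{L}_{X;Y}$ is a Banach hyper-ideal, $B \in \mathcal{L}_{X;Y}(^{mn}G;H)$ with $\|B\|_{X;Y} \le \|t\|\,\|\check{P}\|_{X;Y}\,\|\check{Q}\|^{n}$, and a direct computation gives $\widehat{B} = t \circ P \circ Q$. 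By Theorem \ref{equivhippol}, $t \circ P \circ Q \in \mathcal{P}_{X;Y}$ and $\|t \circ P \circ Q\|_{X;Y;2} \le \|B\|_{X;Y}$; combining with the polarization bound $\|\check{Q}\| \le \frac{m^{m}}{m!}\|Q\|$ yields $\|t \circ P \circ Q\|_{X;Y;2} \le \big(\frac{m^{m}}{m!}\big)^{n}\|t\|\,\|P\|_{X;Y;2}\,\|Q\|^{n}$. For $\|\cdot\|_{X;Y;1}$ I would instead use $\widetilde{t \circ P \circ Q} = \widetilde{t} \circ \widetilde{P} \circ \widetilde{Q}$ (the induced polynomial $\widetilde{Q} = \widehat{(\widetilde{\check{Q}})}$ is available because $X$ is multilinearly stable), so $\|t \circ P \circ Q\|_{X;Y;1} \le \|t\|\,\|P\|_{X;Y;1}\,\|\widetilde{Q}\|^{n}$ and $\|\widetilde{Q}\| = \|\widehat{(\widetilde{\check{Q}})}\| \le \|\widetilde{\check{Q}}\| = \|\check{Q}\| \le \frac{m^{m}}{m!}\|Q\|$, giving the same constant $\big(\frac{m^{m}}{m!}\big)^{n}$. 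The single point demanding genuine care is the completeness claim, where one must know that the symmetrization operator is bounded (indeed norm one) on each $\mathcal{L}_{X;Y}(^{n}E;F)$ — precisely the content of Proposition \ref{L_XYfortSimetrico}; everything else is bookkeeping with Theorems \ref{L_XYhiperideal} and \ref{equivhippol}.
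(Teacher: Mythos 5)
Your proposal is correct, but it is organized differently from the paper's proof, so a comparison is in order. For the norm $\|\cdot\|_{X;Y;2}$ the paper does not argue by hand at all: it simply invokes \cite[Theorem 4.3]{ewertonarxiv}, which states that the polynomials generated by any Banach hyper-ideal of multilinear operators form a $\left(\frac{n^n}{n!}\right)_{n=1}^\infty$-polynomial Banach hyper-ideal, and then it only checks the issues specific to $\|\cdot\|_{X;Y;1}$: norm axioms, completeness via the equivalence of the two norms on each component, the normalization $\|\widehat{I}_n\|_{X;Y;1}=1$ by the $e_1$-test and the H\"older condition, and the hyper-ideal inequality through $[t\circ P\circ Q]^{\sim}=\widetilde{t}\circ\widetilde{P}\circ\widetilde{Q}$ together with $\|\widetilde{Q}\|\leq\|\check{Q}\|\leq\frac{m^m}{m!}\|Q\|$. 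Your treatment of $\|\cdot\|_{X;Y;1}$ coincides with the paper's. What you do differently is to reprove the $\|\cdot\|_{X;Y;2}$ half from scratch: completeness via the isometry $P\mapsto\check{P}$ onto the symmetric part of $\mathcal{L}_{X;Y}(^nE;F)$, which is closed because symmetrization $A\mapsto\frac{1}{n!}\sum_{\sigma\in S_n}A_\sigma$ is a norm-one projection by Proposition \ref{L_XYfortSimetrico}, and the hyper-ideal estimate via the auxiliary operator $B=t\circ\check{P}\circ(\check{Q},\ldots,\check{Q})$, the identity $\widehat{B}=t\circ P\circ Q$, and the infimum formula of Theorem \ref{equivhippol}. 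In effect you give a self-contained proof, in the present setting, of the external theorem the paper cites; this costs length but makes explicit exactly where Proposition \ref{L_XYfortSimetrico} and Theorem \ref{equivhippol} enter, whereas the paper's route is shorter and rests on a result valid for arbitrary Banach hyper-ideals of multilinear operators. All the individual steps you use (the hyper-ideal property of $\mathcal{L}_{X;Y}$ applied with $n$ copies of $\check{Q}$, the surjectivity of $P\mapsto\check{P}$ onto symmetric operators, the norm-one claim for the projection) are justified by the results you cite.

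One remark on constants: your computations yield $\|t\circ P\circ Q\|\leq\left(\frac{m^m}{m!}\right)^{n}\|t\|\cdot\|P\|\cdot\|Q\|^{n}$ for both norms, i.e.\ the constant $C_m^{\,n}$ with $C_m=\frac{m^m}{m!}$. The definition printed in Section \ref{section 1} has $C_m^{\,m}$, which your bound does not literally give when $n>m$; but the paper's own displayed estimate (which ends with $\|\widetilde{Q}\|^{m}$ and $\|Q\|^{m}$) does not match that printed definition either, and the two-sided definition in the same section (take $r=1$) has exponent $n$ on $C_m$. So the discrepancy is a typo in the paper's definition and proof, not a gap in your argument: your exponent is the mathematically correct one.
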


\begin{proof} Using that $(\mathcal{L}_{X;Y}, \|\cdot\|_{X;Y})$ is a Banach hyper-ideal (Theorem \ref{L_XYhiperideal}) and that a polynomial $P$ belongs to $\mathcal{P}_{X;Y}$ if and only if there is $A$ in  $\mathcal{L}_{X;Y}$ such that $\widehat{A} = P$ (Theorem \ref{equivhippol}), it follows from \cite[Theorem 4.3]{ewertonarxiv} that $(\mathcal{P}_{X;Y}, \|\cdot\|_{X;Y;2})$ is a $\left( \frac{n^n}{n!}\right)_{n=1}^\infty$-polynomial Banach hyper-ideal. The issues related to the norm $\|\cdot\|_{X;Y;1}$ are all that is left to be checked. The norm axiomas are straightforward. By Theorem \ref{equivhippol} the norms $\|\cdot\|_{X;Y;1}$ and $\|\cdot\|_{X;Y;2}$ are equivalent on each component $\mathcal{P}_{X;Y}(^nE;F)$. Since this space is complete with respect to $\|\cdot\|_{X;Y;2}$, it is also complete with respect to $\|\cdot\|_{X;Y;1}$. Let us prove that $\|\hat{I}_{n} \colon \mathbb{K} \longrightarrow \mathbb{K}, ~\hat{I}_{n}(\lambda) = \lambda^{n}\|_{X;Y;1} = 1$ for every $n$. Note that
$$ \|\hat{I}_{n}\|_{X;Y;1} = \mbox{\large{$\|$}}\widetilde{\hat{I}_{n}} \mbox{\large{$\|$}} 
= \sup\left\{\mbox{\large{$\|$}}\left(\lambda_j^n\right)_{j=1}^\infty \mbox{\large{$\|$}}_{Y(\mathbb{K})} : (\lambda_j)_{j=1}^\infty \in B_{X(\mathbb{K})} \right\}.
$$
On the one hand, taking $e_1 = (1,0,0,0\ldots,)$ we get $\|e_1\|_{X(\mathbb{K})} = \|e_1\|_{Y(\mathbb{K})} =1 $. The expression above gives $\|\hat{I}_{n}\|_{X;Y;1} \geq 1$. On the other hand, combining the experession above with $X(\mathbb{K})^n \hookrightarrow Y(\mathbb{K})$ we obtain
$$\|\hat{I}_{n}\|_{X;Y;1} \leq \sup\left\{\mbox{\large{$\|$}}\left(\lambda_j\right)_{j=1}^\infty \mbox{\large{$\|$}}^n_{Y(\mathbb{K})} : (\lambda_j)_{j=1}^\infty \in B_{X(\mathbb{K})} \right\} \leq 1. $$

To check the hyper-ideal inequality, let $P \in {\cal P}_{X;Y}(^{n}E;F)$, $Q \in \mathcal{P}(^{m}G;E)$ and $t \in \mathcal{L}(F;H)$ be given. We already know that $t \circ P \circ Q \in {\cal P}_{X;Y}(^{mn}G;H)$. 
The fact that $\widetilde{Q} \colon X(G) \longrightarrow X(E)$ is the polynomial associated to the symmetric operator $\widetilde{\check{Q}} \colon X(G)^m \longrightarrow X(E)$ justifies the first inequality below:
$$\|\widetilde{Q}\| \leq \mbox{\large{$\|$}}\widetilde{\check{Q}} \mbox{\large{$\|$}} = \|\check{Q}\| \leq \frac{m^m}{m!}\|Q\|, $$
where the equality follows from the multilinear stability of $X$. Therefore, 
\begin{align*}\|t \circ P \circ Q\|_{X;Y;1}& = \|[t \circ P \circ Q]^{\sim}\| = \|\widetilde{t} \circ \widetilde{P} \circ \widetilde{Q}\| \leq \|\widetilde{t}\| \cdot \|\widetilde{P}\| \cdot \|\widetilde{Q}\|^m\\
& = \|t\| \cdot \|P\|_{X;Y;1} \cdot \|\widetilde{Q}\|^m \leq \left(\frac{m^m}{m!}\right)^m  \|t\| \cdot \|P\|_{X;Y;1} \cdot\|Q\|^m.
\end{align*}
\end{proof}

The proof above makes clear which condition should be imposed on the sequence class $X$ for $(\mathcal{P}_{X;Y}, \|\cdot\|_{X;Y;1})$ to be a polynomial Banach hyper-ideal without constants (that is, $C_n = 1$ for every $n$).

\begin{definition}\rm A sequence class $X$ is {\it polinomially stable} if, regardless of the $n \in \mathbb{N}$, the Banach spaces $E$ and $F$  and $P \in {\cal P}(^nE;F)$, $(P(x_j))_{j=1}^\infty \in X(F)$ whenever $(x_j)_{j=1}^\infty \in X(E)$ and $\widetilde{P} \colon X(E) \longrightarrow X(F)$ is an $n$-homogeneous polynomial with $\|\widetilde{P}\| = \|P\|$.
\end{definition}

\begin{example}\label{expolest}\rm It is not difficult to see that the sequence classes $\ell_p(\cdot)$, $1 \leq p < \infty$, are polynomially stable.
\end{example}

Bearing in mind Theorem \ref{P_XYhiperideal} and the final part of its proof,  a moment's reflection gives the following result.

\begin{theorem}\label{teooet} Let $X$ be a polynomially stable sequence class and  $Y$ be a linearly stable sequence class such that $X(\mathbb{K})^n \hookrightarrow Y(\mathbb{K})$ for every $n \in \mathbb{N}$. Then $(\mathcal{P}_{X;Y}, \|\cdot\|_{X;Y;1})$ is a polynomial Banach hyper-ideal. 
\end{theorem}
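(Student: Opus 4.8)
The plan is to run the proof of Theorem~\ref{P_XYhiperideal} again, isolating the single place where multilinear stability of $X$ was used and checking that polynomial stability not only suffices there but in fact removes the constants $\left(\frac{m^m}{m!}\right)^m$. Concretely, the only use of multilinear stability of $X$ in that proof is the chain $\|\widetilde{Q}\|\le\|\check{Q}\|\le\frac{m^m}{m!}\|Q\|$ for $Q\in\mathcal{P}(^mG;E)$; when $X$ is polynomially stable one has instead $\|\widetilde{Q}\|=\|Q\|$ directly, and the constant disappears.

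First I would dispose of the component axioms for $(\mathcal{P}_{X;Y},\|\cdot\|_{X;Y;1})$. Since a polynomially stable sequence class is in particular linearly stable (take $n=1$), all the arguments from the proof of Theorem~\ref{P_XYhiperideal} showing that $\mathcal{P}_{X;Y}(^nE;F)$ is a linear subspace of $\mathcal{P}(^nE;F)$ containing the finite type polynomials, and that $\|\widehat{I}_n\|_{X;Y;1}=1$, remain valid verbatim: these use only that $Y(F)$ is a vector space, the linear stability of $X$ and $Y$, and the H\"older condition $X(\mathbb{K})^n\hookrightarrow Y(\mathbb{K})$. For completeness of $\|\cdot\|_{X;Y;1}$ on $\mathcal{P}_{X;Y}(^nE;F)$, rather than invoking the equivalence with $\|\cdot\|_{X;Y;2}$ (which passed through $\mathcal{L}_{X;Y}$, hence through multilinear stability), I would give the direct argument: from $\|P\|\le\|\widetilde{P}\|=\|P\|_{X;Y;1}$ a $\|\cdot\|_{X;Y;1}$-Cauchy sequence $(P_k)_k$ converges in $\mathcal{P}(^nE;F)$ to some $P$; for each fixed $(x_j)_j\in B_{X(E)}$ the sequence $\bigl((P_k(x_j))_j\bigr)_k$ is Cauchy in $Y(F)$, and its limit coincides coordinatewise with $(P(x_j))_j$ because $Y(F)\hookrightarrow\ell_\infty(F)$ and $P_k\to P$ uniformly on bounded sets; hence $(P(x_j))_j\in Y(F)$, so $P\in\mathcal{P}_{X;Y}$, and a routine $\limsup$ estimate gives $\|P-P_k\|_{X;Y;1}\to0$.

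Then I would prove the hyper-ideal property with $C_n=1$. Given $P\in\mathcal{P}_{X;Y}(^nE;F)$, $Q\in\mathcal{P}(^mG;E)$ and $t\in\mathcal{L}(F;H)$, the membership $t\circ P\circ Q\in\mathcal{P}_{X;Y}(^{mn}G;H)$ follows by pushing a sequence $(z_j)_j\in X(G)$ through three stages: polynomial stability of $X$ gives $(Q(z_j))_j\in X(E)$, the hypothesis $P\in\mathcal{P}_{X;Y}$ gives $(P(Q(z_j)))_j\in Y(F)$, and linear stability of $Y$ gives $(t(P(Q(z_j))))_j\in Y(H)$. For the norm, using the routine identity $[t\circ P\circ Q]^{\sim}=\widetilde{t}\circ\widetilde{P}\circ\widetilde{Q}$ and submultiplicativity of norms under composition of a linear map with homogeneous polynomials,
$$\|t\circ P\circ Q\|_{X;Y;1}=\bigl\|\widetilde{t}\circ\widetilde{P}\circ\widetilde{Q}\bigr\|\le\|\widetilde{t}\|\cdot\|\widetilde{P}\|\cdot\|\widetilde{Q}\|^{n},$$
and now $\|\widetilde{t}\|=\|t\|$ (linear stability of $Y$), $\|\widetilde{P}\|=\|P\|_{X;Y;1}$ (definition of the norm), and --- the crucial point --- $\|\widetilde{Q}\|=\|Q\|$ (polynomial stability of $X$), so that $\|t\circ P\circ Q\|_{X;Y;1}\le\|t\|\cdot\|P\|_{X;Y;1}\cdot\|Q\|^{n}$, which is exactly the hyper-ideal inequality without constants.

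As this shows, there is no serious obstacle: the only point requiring care is to use polynomial stability of $X$ in full strength --- the exact equality $\|\widetilde{Q}\|=\|Q\|$, not merely $\|\widetilde{Q}\|\le C\|Q\|$ --- and to apply it to the inner polynomial $Q$ only, the outer polynomial $P$ being handled through the definition of $\|\cdot\|_{X;Y;1}$. A minor bookkeeping point is that, as noted above, completeness of the components can be obtained directly, so that multilinear stability of $X$ is never needed.
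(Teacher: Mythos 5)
Your proof is correct, and it is in essence the argument the paper has in mind: Theorem \ref{teooet} is stated with only the remark that ``a moment's reflection'' on the proof of Theorem \ref{P_XYhiperideal} gives it, the point being to replace the estimate $\|\widetilde{Q}\| \leq \|\widetilde{\check{Q}}\| = \|\check{Q}\| \leq \frac{m^m}{m!}\|Q\|$ by the exact equality $\|\widetilde{Q}\| = \|Q\|$ coming from polynomial stability, which is precisely your crucial step. Where you genuinely depart from the paper is in how you handle the component axioms and the membership $t\circ P\circ Q \in \mathcal{P}_{X;Y}(^{mn}G;H)$: the proof of Theorem \ref{P_XYhiperideal} obtains these through the norm $\|\cdot\|_{X;Y;2}$, i.e.\ through $\mathcal{L}_{X;Y}$ being a Banach hyper-ideal (Theorem \ref{L_XYhiperideal}), which uses the multilinear stability of $X$ --- a hypothesis not present in Theorem \ref{teooet}, and not an obvious consequence of polynomial stability. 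Your direct verification of completeness of $\|\cdot\|_{X;Y;1}$ (via $\|P\| \leq \|\widetilde{P}\|$, coordinatewise identification of limits through $Y(F)\hookrightarrow \ell_\infty(F)$, and a $\limsup$ estimate) and your direct three-stage argument for membership (polynomial stability of $X$, then $(X;Y)$-summability of $P$, then linear stability of $Y$) make the theorem self-contained under exactly the stated hypotheses, whereas the paper's shortcut implicitly leans on the framework of Theorem \ref{P_XYhiperideal}; in that sense your route is slightly longer but cleaner, and it buys the result for polynomially stable $X$ that are not assumed multilinearly stable. Two small points of care: the finite-type and subspace axioms are not literally ``verbatim'' from the proof of Theorem \ref{P_XYhiperideal} (there they came through $\|\cdot\|_{X;Y;2}$), but your observation that they follow from linear stability of $X$ and $Y$ together with $X(\mathbb{K})^n\hookrightarrow Y(\mathbb{K})$ is correct and routine; and your exponent $\|\widetilde{Q}\|^{n}$ is the right one for the composition of an $n$-homogeneous with an $m$-homogeneous polynomial, matching the inequality $\|t\circ P\circ Q\|_{\mathcal{Q}} \leq C_m^m\,\|t\|\,\|P\|_{\mathcal{Q}}\,\|Q\|^n$ with $C_m=1$.
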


Next we proceed to the construction of polynomial two-sided ideals. 

\begin{theorem}\label{teoteo}
Let $X$ and $Y$ be multilinearly stable sequence classes such that $X(\mathbb{K})^n \hookrightarrow Y(\mathbb{K})$ for every $n \in \mathbb{N}$. Then $(\mathcal{P}_{X;Y}, \|\cdot\|_{X;Y;2})$ is a $\left(\frac{n^n}{n!},\frac{n^n}{n!} \right)$-polynomial Banach two-sided ideal.
\end{theorem}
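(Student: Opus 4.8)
The plan is to reduce the two-sided ideal property to the hyper-ideal property already established in Theorem \ref{P_XYhiperideal} together with a separate analysis of composition with a polynomial $R$ on the left. First I would record that, since $Y$ is multilinearly stable it is in particular linearly stable, so all hypotheses of Theorems \ref{L_XYhiperideal}, \ref{equivhippol} and \ref{P_XYhiperideal} are met; thus $(\mathcal{P}_{X;Y},\|\cdot\|_{X;Y;2})$ is already a $\left(\frac{n^n}{n!}\right)_{n=1}^\infty$-polynomial Banach hyper-ideal, which takes care of the norm axioms, completeness, the value of $\|\widehat{I}_n\|_{X;Y;2}$, and the estimate for $\|t\circ P\circ Q\|_{X;Y;2}$ when $t$ is \emph{linear}. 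What remains is: given $P\in\mathcal{P}_{X;Y}(^nE;F)$, $Q\in\mathcal{P}(^mG;E)$ and $R\in\mathcal{P}(^rF;H)$, show $R\circ P\circ Q\in\mathcal{P}_{X;Y}(^{rmn}G;H)$ with $\|R\circ P\circ Q\|_{X;Y;2}\le \frac{r^r}{r!}\cdot\left(\frac{m^m}{m!}\right)^{rn}\|R\|\cdot\|P\|_{X;Y;2}^r\cdot\|Q\|^{rn}$.

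The key idea is to work at the level of induced maps on sequence spaces, exactly as in the proof of Theorem \ref{P_XYhiperideal}, but now using the \emph{multilinear} stability of $Y$ to handle the outer polynomial $R$. Using the characterization (d) of Theorem \ref{equivhippol}, membership of a polynomial $S$ in $\mathcal{P}_{X;Y}$ is equivalent to the induced map $\widetilde{S}\colon X(G)\to Y(H)$ being a well-defined continuous homogeneous polynomial, and then $\|S\|_{X;Y;1}=\|\widetilde{S}\|$. First I would show $\widetilde{Q}\colon X(G)\to X(E)$ is a continuous $m$-homogeneous polynomial with $\|\widetilde{Q}\|\le \frac{m^m}{m!}\|Q\|$ — this is literally the computation already displayed in the proof of Theorem \ref{P_XYhiperideal}, relying on the multilinear stability of $X$ applied to $\check{Q}$. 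Next, since $P\in\mathcal{P}_{X;Y}$, the map $\widetilde{P}\colon X(E)\to Y(F)$ is a continuous $n$-homogeneous polynomial with $\|\widetilde{P}\|=\|P\|_{X;Y;1}\le\|P\|_{X;Y;2}$. The new point is $\widetilde{R}$: because $Y$ is multilinearly stable, $\check{R}\in\mathcal{L}(^rF;H)$ induces $\widetilde{\check{R}}\colon Y(F)^r\to Y(H)$ with $\|\widetilde{\check{R}}\|=\|\check{R}\|$, whence $\widetilde{R}=\widehat{\bigl(\widetilde{\check{R}}\bigr)}\colon Y(F)\to Y(H)$ is a continuous $r$-homogeneous polynomial with $\|\widetilde{R}\|\le\|\check{R}\|\le\frac{r^r}{r!}\|R\|$. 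Then, by the chain rule for induced maps — i.e. the identity $[R\circ P\circ Q]^{\sim}=\widetilde{R}\circ\widetilde{P}\circ\widetilde{Q}$, which I would verify coordinatewise — the composition $\widetilde{R}\circ\widetilde{P}\circ\widetilde{Q}\colon X(G)\to Y(H)$ is a well-defined continuous $(rmn)$-homogeneous polynomial, so by Theorem \ref{equivhippol} we get $R\circ P\circ Q\in\mathcal{P}_{X;Y}(^{rmn}G;H)$.

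It then remains to assemble the norm estimate. From $[R\circ P\circ Q]^{\sim}=\widetilde{R}\circ\widetilde{P}\circ\widetilde{Q}$ and the submultiplicativity of the polynomial norm under composition we get
$$\|R\circ P\circ Q\|_{X;Y;1}=\bigl\|[R\circ P\circ Q]^{\sim}\bigr\|\le\|\widetilde{R}\|\cdot\|\widetilde{P}\|^r\cdot\|\widetilde{Q}\|^{rn}\le\frac{r^r}{r!}\|R\|\cdot\|P\|_{X;Y;1}^r\cdot\left(\frac{m^m}{m!}\right)^{rn}\|Q\|^{rn}.$$
To pass from $\|\cdot\|_{X;Y;1}$ to $\|\cdot\|_{X;Y;2}$ on the left-hand side I would invoke Theorem \ref{equivhippol}, which for the polynomial $R\circ P\circ Q$ of degree $rmn$ gives $\|R\circ P\circ Q\|_{X;Y;2}\le\frac{(rmn)^{rmn}}{(rmn)!}\|R\circ P\circ Q\|_{X;Y;1}$ — but this introduces a constant depending on $rmn$, which is too large and does not match the claimed two-sided estimate. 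The cleaner route, and the one I would pursue, is to apply the general transference result \cite[Theorem 4.3]{ewertonarxiv} (already cited in the proof of Theorem \ref{P_XYhiperideal}): since $\mathcal{L}_{X;Y}$ is a Banach \emph{two-sided multi-ideal} — which follows because $X$ and $Y$ are both multilinearly stable, by the same argument as Theorem \ref{L_XYhiperideal} but now allowing a multilinear operator on the right composed with $t$ replaced by a multilinear operator — the associated polynomial ideal $\mathcal{P}_{X;Y}$ with the norm $\|\cdot\|_{X;Y;2}=\|\check{\cdot}\|_{X;Y}$ inherits the $\left(\frac{n^n}{n!},\frac{n^n}{n!}\right)$-two-sided property automatically. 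I expect the main obstacle to be precisely this bookkeeping: verifying that $\mathcal{L}_{X;Y}$ is a two-sided multi-ideal (which requires $Y$ multilinearly stable so that $\widetilde{\check{R}}$ lands in $Y(H)$) and then checking that the constants produced by the polynomial-to-multilinear passage in \cite[Theorem 4.3]{ewertonarxiv} are exactly $\frac{r^r}{r!}$ and $\left(\frac{m^m}{m!}\right)^{rn}$ as claimed, rather than something worse coming from symmetrizing the composite. If the constants from that black box are not tight enough, the fallback is the direct induced-map computation above, taking care to express the final bound using $\|\check{R}\|$, $\|\check{Q}\|$ and $\|P\|_{X;Y;2}$ from the start so that no degree-$rmn$ symmetrization constant ever enters.
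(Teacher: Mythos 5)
Your strategy ends up being essentially the paper's: the only substantive new verification is that, at the level of induced maps on sequence spaces, the multilinear stability of $Y$ permits composition on the left with a (symmetric) multilinear operator, and the passage to the polynomial statement with the constants $\left(\frac{n^n}{n!},\frac{n^n}{n!}\right)$ is delegated to a transference result from the Botelho--Torres papers. Two caveats. First, you name the wrong black box: \cite[Theorem 4.3]{ewertonarxiv} is the hyper-ideal transference already used in Theorem \ref{P_XYhiperideal} and yields no two-sidedness; the result the paper invokes is \cite[Proposition 4.2]{ewertonjmaa}, whose assumption is exactly the factorization condition the paper checks, namely that $C\circ(B_1,\ldots,B_r)\in\mathcal{L}_{X;Y}$ with norm at most $\|C\|\cdot\|B_1\|_{X;Y}\cdots\|B_r\|_{X;Y}$ whenever $C$ is symmetric and continuous and $B_1,\ldots,B_r\in\mathcal{L}_{X;Y}$ take values in its domain space --- verified, as you propose, because each $\widetilde{B_i}$ lands in $Y(G)$ and $\widetilde{C}\colon Y(G)^r\to Y(F)$ has norm $\|C\|$. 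Second, your worry about a degree-$rmn$ constant is dissolved by that proposition rather than by your unfinished fallback: since $\|\cdot\|_{X;Y;2}=\|\check{\,\cdot\,}\|_{X;Y}$ equals the infimum of $\|A\|_{X;Y}$ over all $A$ with $\widehat{A}=R\circ P\circ Q$ (Theorem \ref{equivhippol}, resting on Proposition \ref{L_XYfortSimetrico}), one never symmetrizes the composite, and only $\frac{r^r}{r!}$ and $\left(\frac{m^m}{m!}\right)^{rn}$ appear. So the key idea and verification are right, but as written the proof leans on a miscited result and leaves the decisive constant bookkeeping unestablished.
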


\begin{proof} Since $({\cal L}_{X;Y}; \|\cdot\|_{X;Y})$ is a Banach hyper-ideal of multilinear operataors (Theorem \ref{L_XYhiperideal}), by Theorem \ref{equivhippol} it is enough to show that this hyper-ideal fulfills the assumption of \cite[Proposition 4.2]{ewertonjmaa}. To do so, let $A \in {\cal L}_{X;Y}(E_1, \ldots, E_n;G)$, $m,r \in \mathbb{N}$ be such that $mr = n$, $G$ be a Banach space, $B_1 \in {\cal L}_{X;Y}(E_1, \ldots, E_m;G), \ldots,$ $B_r \in {\cal L}(E_{(r-1)m+1}, \ldots, E_n;G)$ and $C \in {\cal L}(^rG;G)$ be a symmetric operator such that $A = C \circ (B_1, \ldots, B_r)$. For all $(x_j^k)_{j=1}^\infty \in X(E_k)$, $k = 1, \ldots, n$,  $$(B_1(x_j^1, \ldots, x_j^m))_{j=1}^\infty, \ldots, (B_r(x_j^{(r-1)m+1}, \ldots, x_j^n))_{j=1}^\infty \in Y(G) ~{\rm and}$$
$$\|\widetilde{B_1}\colon X(E_1) \times \cdots \times X(E_m)\longrightarrow Y(G)\| = \|B_1\|, \ldots, $$
$$\|\widetilde{B_r}\colon X(E_{(r-1)m+1}) \times \cdots \times X(E_n)\longrightarrow G\| = \|B_r\|. $$
The multilinear stability of $Y$ gives
\begin{align*}(A(x_j^1, \ldots, x_j^n))_{j=1}^\infty &= ((C\circ (B_1, \ldots, B_r)(x_j^1, \ldots, x_j^n))_{j=1}^\infty\\
& =\left(C(B_1(x_j^1, \ldots, x_j^m), \ldots, B_r(x_j^{(r-1)m+1}, \ldots, x_j^n)) \right)_{j=1}^\infty \in Y(F),
\end{align*}
showing that $A \in {\cal L}_{X;Y}(E_1, \ldots, E_n;F)$ and, moreover,
\begin{align*}\|A\|_{X;Y}&=\|\widetilde{A} \colon X(E_1) \times \cdots, X(E_n) \longrightarrow Y(F)\| =  \| \widetilde{C} \circ (\widetilde{B_1}, \ldots, \widetilde{B_r}\|\\ & \leq \|\widetilde{C} \colon Y(G)^r \longrightarrow Y(F)\|\cdots \|\widetilde{B_1}\|\cdots \|\widetilde{B_r}\| = \|C\|\cdot \|B_1\| \cdots \|B_r\|.
\end{align*}
\end{proof}

With polynomial stability we can go a bit further. 

\begin{theorem} \label{teobilat3}
Let $X$ and $Y$ be sequence classes such that $X$ is polynomially stable and $X(\mathbb{K})^n \hookrightarrow Y(\mathbb{K})$ for every $n \in \mathbb{N}$.\\
{\rm (a)} If $Y$ is multilinearly stable, then $(\mathcal{P}_{X;Y}, \|\cdot\|_{X;Y;1})$ is a  $\left(1, \frac{n^n}{n!}\right)_{n=1}^\infty$-polynomial Banach two-sided ideal.\\
{\rm (b)} If $Y$ is polynomially stable, then $(\mathcal{P}_{X;Y}, \|\cdot\|_{X;Y;1})$ is a Banach polynomial two-sided ideal. 
\end{theorem}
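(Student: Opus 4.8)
The plan is the following. Both polynomial stability and multilinear stability of a sequence class imply linear stability (take $n=1$), so in each of (a) and (b) the class $Y$ is linearly stable while $X$ is polynomially stable; hence Theorem \ref{teooet} already tells us that $(\mathcal{P}_{X;Y},\|\cdot\|_{X;Y;1})$ is a polynomial Banach hyper-ideal. Thus the normed-ideal axioms (each $\mathcal{P}_{X;Y}(^{n}E;F)$ a linear subspace of $\mathcal{P}(^{n}E;F)$ containing the finite-type polynomials, completeness of $\|\cdot\|_{X;Y;1}$, and $\|\widehat{I}_n\|_{X;Y;1}=1$) are already available, and the only thing left to prove is the two-sided composition inequality: given $P\in\mathcal{P}_{X;Y}(^{n}E;F)$, $Q\in\mathcal{P}(^{m}G;E)$ and $R\in\mathcal{P}(^{r}F;H)$, we must show that $R\circ P\circ Q\in\mathcal{P}_{X;Y}(^{rmn}G;H)$ with the norm bound stated in (a), respectively (b). As in the proof of Theorem \ref{teoteo}, the argument runs through induced maps on the sequence spaces; the difference here is that the norm $\|\cdot\|_{X;Y;1}$ is defined directly via the induced polynomial, which lets us compose polynomials without passing to symmetrizations except where $R$ forces it.

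First I would assemble the three induced maps. Polynomial stability of $X$ gives a well-defined continuous $m$-homogeneous polynomial $\widetilde{Q}\colon X(G)\to X(E)$ with $\|\widetilde{Q}\|=\|Q\|$, and Definition \ref{deffed} together with Theorem \ref{equivhippol} gives a continuous $n$-homogeneous polynomial $\widetilde{P}\colon X(E)\to Y(F)$ with $\|\widetilde{P}\|=\|P\|_{X;Y;1}$. The crux is to produce a continuous $r$-homogeneous polynomial $\widetilde{R}\colon Y(F)\to Y(H)$ acting by $\widetilde{R}((y_j)_{j=1}^\infty)=(R(y_j))_{j=1}^\infty$, with a usable norm estimate. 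In case (b) this is immediate from the polynomial stability of $Y$, and $\|\widetilde{R}\|=\|R\|$. In case (a), where $Y$ is only multilinearly stable, I would detour through the symmetric $r$-linear operator $\check{R}\colon F^r\to H$: multilinear stability of $Y$ yields a continuous $r$-linear map $\widetilde{\check{R}}\colon Y(F)^r\to Y(H)$ with $\|\widetilde{\check{R}}\|=\|\check{R}\|$, and a coordinatewise computation identifies $\widetilde{R}$ with the $r$-homogeneous polynomial associated to $\widetilde{\check{R}}$; hence $\widetilde{R}$ is $Y(H)$-valued, and the polarization estimate $\|\check{R}\|\le\frac{r^r}{r!}\|R\|$ (used exactly as in the last step of the proof of Theorem \ref{equivhippol}, via \cite[Theorem 2.2]{mujica}) gives $\|\widetilde{R}\|\le\|\widetilde{\check{R}}\|=\|\check{R}\|\le\frac{r^r}{r!}\|R\|$.

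To conclude, a routine coordinatewise check yields $[R\circ P\circ Q]^{\sim}=\widetilde{R}\circ\widetilde{P}\circ\widetilde{Q}\colon X(G)\to Y(H)$, so the right-hand side is a well-defined continuous $(rmn)$-homogeneous polynomial into $Y(H)$; by Theorem \ref{equivhippol} this means $R\circ P\circ Q\in\mathcal{P}_{X;Y}(^{rmn}G;H)$ and $\|R\circ P\circ Q\|_{X;Y;1}=\|\widetilde{R}\circ\widetilde{P}\circ\widetilde{Q}\|$. Applying twice the norm inequality for composition of homogeneous polynomials of degrees $r$, $n$, $m$, that is $\|\widetilde{R}\circ\widetilde{P}\circ\widetilde{Q}\|\le\|\widetilde{R}\|\cdot\|\widetilde{P}\|^{r}\cdot\|\widetilde{Q}\|^{rn}$, and inserting the estimates above, we obtain in case (a)
$$\|R\circ P\circ Q\|_{X;Y;1}\le\frac{r^r}{r!}\cdot\|R\|\cdot\|P\|_{X;Y;1}^{r}\cdot\|Q\|^{rn},$$
which is precisely the inequality in the definition of a $\left(1,\frac{n^n}{n!}\right)_{n=1}^\infty$-polynomial Banach two-sided ideal (with $C_m=1$, $K_r=\frac{r^r}{r!}$), and in case (b) the sharper bound
$$\|R\circ P\circ Q\|_{X;Y;1}\le\|R\|\cdot\|P\|_{X;Y;1}^{r}\cdot\|Q\|^{rn},$$
so that $(\mathcal{P}_{X;Y},\|\cdot\|_{X;Y;1})$ is a Banach polynomial two-sided ideal. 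I expect the main obstacle to be exactly the construction of $\widetilde{R}$ on $Y(F)$ in case (a): since $R$ is a genuine polynomial and $Y$ is not assumed polynomially stable, one cannot induce $\widetilde{R}$ directly and must pass through $\check{R}$, and this is precisely the source of the constant $\frac{r^r}{r!}$ that appears in (a) but is absent in (b). Everything else — the coordinatewise identities and the polynomial-composition norm estimates — is routine.
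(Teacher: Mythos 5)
Your proof is correct and follows essentially the same route as the paper's: the ideal axioms come from Theorem \ref{teooet}, the right-hand composition is controlled via the polynomial stability of $X$ (constant $1$), and the composition with $R$ on the left is handled in case (a) by identifying $\widetilde{R}$ with the polynomial associated to $\widetilde{\check{R}}$ (multilinear stability of $Y$), which is exactly where the factor $\frac{r^r}{r!}$ enters, and in case (b) directly from the polynomial stability of $Y$. The only cosmetic difference is that you factor $[R\circ P\circ Q]^{\sim}=\widetilde{R}\circ\widetilde{P}\circ\widetilde{Q}$ and read off membership from Theorem \ref{equivhippol}, while the paper first applies the hyper-ideal inequality to $P\circ Q$ and invokes Theorem \ref{teoteo}; the resulting estimates coincide.
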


\begin{proof} We just prove (a). By the previous theorem it is enough to show that the two-sided ideal inequality holds with constants $\left(1, \frac{n^n}{n!}\right)_{n=1}^\infty$. Let $m,n, r \in \mathbb{N}$, $P \in \mathcal{P}_{X;Y}(^{n}E;F)$, $Q \in \mathcal{P}(^{m}G;E)$ and $R \in \mathcal{P}(^{r}F;H)$ be given. 
Denoting $\widetilde{R}\colon Y(F) \longrightarrow Y(H)$, $[P \circ Q]^\sim \colon X(G) \longrightarrow Y(F)$ and $[R\circ P \circ Q]^\sim \colon X(G) \longrightarrow Y(H)$, it holds
$[R\circ P \circ Q]^\sim = \widetilde{R} \circ  [P \circ Q]^\sim.$ As in the proof of Theorem \ref{equivhippol}, $\widetilde{R} = \widehat{\LARGE{(}\widetilde{\check{R}}\LARGE{)}}$, thus the multilinear stability of $Y$ gives 
\begin{align*}\|R\circ P \circ Q\|_{X;Y;1} &=  \|\widetilde{R} \circ [P\circ Q] ^\sim\| \leq \|\widetilde{R}\|\cdot \|P \circ Q\|_{X;Y;1}^r \leq \|\widetilde{R}\|\cdot \|P\|_{X;Y;1}^r\cdot \|Q\|^{rn}\\
&= \mbox{\large{$\|$}}\widehat{\LARGE{(}\widetilde{\check{R}}\LARGE{)}}\mbox{\large{$\|$}}\cdot \|P\|_{X;Y;1}^r\cdot \|Q\|^{rn} \leq \mbox{\large{$\|$}}\widetilde{\check{R}}\mbox{\large{$\|$}}\cdot \|P\|_{X;Y;1}^r\cdot \|Q\|^{rn} \\
& = \left\|\check{R}\right\|\cdot \|P\|_{X;Y;1}^r\cdot \|Q\|^{rn} \leq \frac{r^r}{r!}\cdot \left\|R\right\|\cdot \|P\|_{X;Y;1}^r\cdot \|Q\|^{rn},
\end{align*}
where the second inequality follows from Theorem \ref{teooet}.
\end{proof}

As we did in the multilinear case, we shall show that some well studied polynomial ideal are actually hyper or two-sided ideals. The last example concerns a new class. To avoid unnecessary repetitions, we shall be quite briefer.

\begin{example}\rm Applying the theorems proved in this section for the sequence classes we saw in Examples \ref{exmultest} and \ref{expolest}, we get the following examples:

\medskip

\noindent (a) The class ${\cal P}_{\ell_1^w(\cdot); \ell_1(\cdot)}$ of absolutely summing polynomials and the class ${\cal P}_{\ell_1^w(\cdot); \ell_1^w(\cdot)}$ of weakly summing polynomials are $\left( \frac{n^n}{n!}\right)_{n=1}^\infty$-polynomial Banach hyper-ideals with the norm $\|\cdot\|_{X;Y;1}$ and $\left( \frac{n^n}{n!}, \frac{n^n}{n!}\right)_{n=1}^\infty$-polynomial Banach two-sided ideals with the norm $\|\cdot\|_{X;Y;2}$.

%

\medskip

\noindent (b) For $1 \leq p \leq 2$, the class ${\cal P}_{\ell_p(\cdot); {\rm Rad}(\cdot)}$ of polynomials of type $p$ is a $\left( 1, \frac{n^n}{n!}\right)_{n=1}^\infty$-polynomial Banach two-sided ideal with the norm $\|\cdot\|_{X;Y;1}$ and a $\left( \frac{n^n}{n!}, \frac{n^n}{n!}\right)_{n=1}^\infty$-polynomial Banach two-sided ideal with the norm $\|\cdot\|_{X;Y;2}$.

\medskip

\noindent (c) For $2 \leq q < +\infty$, the class ${\cal P}_{{\rm Rad}(\cdot); \ell_q(\cdot)}$ of polynomials of cotype $q$ is a $\left( \frac{n^n}{n!}\right)_{n=1}^\infty$-polynomial Banach hyper-ideal with the norm $\|\cdot\|_{X;Y;1}$ and a $\left( \frac{n^n}{n!}, \frac{n^n}{n!}\right)_{n=1}^\infty$-polynomial Banach two-sided ideal with the norm $\|\cdot\|_{X;Y;2}$.

\medskip

\noindent (d) The class ${\cal P}_{{\rm Rad}(\cdot); \ell_2\langle\cdot\rangle}$ of Cohen almost summing polynomials is a $\left( \frac{n^n}{n!}\right)_{n=1}^\infty$-polynomial Banach hyper-ideal with the norm $\|\cdot\|_{X;Y;1}$ and a $\left( \frac{n^n}{n!}, \frac{n^n}{n!}\right)_{n=1}^\infty$-polynomial Banach two-sided ideal with the  norm $\|\cdot\|_{X;Y;2}$.

\medskip

\noindent (e) For $1 \leq p < +\infty$, the class ${\cal P}_{\ell_p(\cdot); \ell_p\langle \cdot\rangle}$ of polynomials associated to the multi-ideal ${\cal L}_{\ell_p(\cdot); \ell_p\langle \cdot\rangle}$ from Example \ref{llexe} is a $\left( 1, \frac{n^n}{n!}\right)_{n=1}^\infty$-polynomial Banach two-sided ideal with the norm $\|\cdot\|_{X;Y;1}$ and a $\left( \frac{n^n}{n!}, \frac{n^n}{n!}\right)_{n=1}^\infty$-polynomial Banach two-sided ideal with the norm $\|\cdot\|_{X;Y;2}$.
\end{example}

\section{Composition ideals}\label{section 4}

Composition multi-ideals are always hyper-ideals \cite{ewertonlaa}, composition polynomial ideals  are always hyper-ideals \cite{ewertonarxiv} and sometimes they are two-sided ideals \cite{ewertonjmaa}. To make sure that we are creating new hyper and two-sided ideals in this paper, in this section we check that the multi-ideals ${\cal L}_{X;Y}$ and the polynomial ideals ${\cal P}_{X;Y}$ are not composition ideals in general. Our main results hold for $X$ multilinearly stable, so we have to give a counterexample with $X$ multilinearly stable. 

The notion of composition ideals goes back to Pietsch \cite{pietsch83}, now we give the definitions that matter in our environment. Let $X$ and $Y$ be sequence classes such that $({\cal L}_{X;Y},\|\cdot\|_{X;Y})$ is a Banach multi-ideal, that is, $X$ and $Y$ are linearly stable and satisfy the compatibility condition. By $\Pi_{X;Y}$ we denote the linear component of this multi-ideal, that is, the linear operators $u \colon E \longrightarrow F$ that send sequence in $X(E)$ to sequences in $Y(F)$. Then $(\Pi_{X;Y},\|\cdot\|_{X;Y})$ is a Banach operator ideal. 

\begin{definition}\rm \cite{prims} We say that an $n$-linear operator $A \in {\cal L}(E_1, \ldots, E_n;F)$ belongs to $(\Pi_{X;Y} \circ {\cal L})(E_1, \ldots, E_n;F)$ if there are a Banach space $G$, an $n$-linear operator $B  \in {\cal L}(E_1, \ldots, E_n;G)$ and a linear operator $u \in \Pi_{X;Y}(G;F)$ such that $A = u \circ B$. In this case we define
$$\|A\|_{\Pi_{X;Y} \circ {\cal L}} = \inf\{\|B\|\cdot \|u\|_{X;Y} : A = u \circ B, u \in  \Pi_{X;Y}\}. $$
And we say that a polynomial $P \in {\cal P}(^nE;F)$ belongs to $(\Pi_{X;Y} \circ {\cal P})(^nE;F)$ if there are a Banach space $G$, a polynomial  $Q  \in {\cal P}(^n E;G)$ and a linear operator $u \in \Pi_{X;Y}(G;F)$ such that $P = u \circ Q$, or, equivalently, if $\check{P} \in (\Pi_{X;Y} \circ {\cal L})(^n E;F)$. In this case we define
$$\|P\|_{\Pi_{X;Y} \circ {\cal L}} = \inf\{\|Q\|\cdot \|u\|_{X;Y} : P = u \circ Q, u \in  \Pi_{X;Y}\}. $$
\end{definition}

We start by proving that, in the cases that matter to us, the ideals we work with in this paper contain these composition ideals.

\begin{proposition}\label{xycomp} Let $X$ and $Y$ be sequence classes with $X$ multilinearly stable. Then $\Pi_{X;Y} \circ {\cal L} \subseteq {\cal L}_{X;Y}$ with $ {\|\cdot\|}_{X;Y}\leq\|\cdot\|_{\Pi_{X;Y}\circ {\cal L}}$ and $\Pi_{X;Y} \circ {\cal P} \subseteq {\cal P}_{X;Y}$ with $ {\|P\|}_{X;Y;2}\leq \frac{n^n}{n!}\cdot \|P\|_{\Pi_{X;Y}\circ {\cal P}}$ for every $P \in (\Pi_{X;Y} \circ {\cal P})(^nE;F)$. Moreover, is $X$ is polynomially stable, then $ {\|P\|}_{X;Y;1}\leq  \|P\|_{\Pi_{X;Y}\circ {\cal P}}$.
\end{proposition}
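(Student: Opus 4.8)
The plan is to verify the three inclusions directly from the definitions, using the stability properties of $X$ and $Y$ together with Theorem~\ref{equivhippol} and the norm estimates recorded there. For the multilinear inclusion, take $A \in (\Pi_{X;Y} \circ {\cal L})(E_1, \ldots, E_n;F)$ and a factorization $A = u \circ B$ with $B \in {\cal L}(E_1, \ldots, E_n;G)$ and $u \in \Pi_{X;Y}(G;F)$. Given sequences $(x_j^m)_{j=1}^\infty \in X(E_m)$, $m = 1, \ldots, n$, the multilinear stability of $X$ yields $(B(x_j^1, \ldots, x_j^n))_{j=1}^\infty \in X(G)$ with $\|\widetilde B\| = \|B\|$, and then $u \in \Pi_{X;Y}(G;F)$ gives $(u(B(x_j^1, \ldots, x_j^n)))_{j=1}^\infty \in Y(F)$, i.e. $(A(x_j^1, \ldots, x_j^n))_{j=1}^\infty \in Y(F)$. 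This shows $A \in {\cal L}_{X;Y}$, and since the induced operator factors as $\widetilde A = \widetilde u \circ \widetilde B$, we get $\|A\|_{X;Y} = \|\widetilde A\| \le \|\widetilde u\| \cdot \|\widetilde B\| = \|u\|_{X;Y} \cdot \|B\|$; taking the infimum over all such factorizations gives $\|A\|_{X;Y} \le \|A\|_{\Pi_{X;Y} \circ {\cal L}}$.

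For the polynomial inclusion with the norm $\|\cdot\|_{X;Y;2}$, let $P \in (\Pi_{X;Y} \circ {\cal P})(^nE;F)$. By definition this means $\check P \in (\Pi_{X;Y} \circ {\cal L})(^nE;F)$, so the first part gives $\check P \in {\cal L}_{X;Y}(^nE;F)$; by Theorem~\ref{equivhippol}(a) this is exactly the statement $P \in {\cal P}_{X;Y}(^nE;F)$. For the norm, fix a factorization $P = u \circ Q$ with $Q \in {\cal P}(^nE;G)$ and $u \in \Pi_{X;Y}(G;F)$. Then $\check P = u \circ \check Q$, and the first part applied to $\check P$ gives $\|\check P\|_{X;Y} = \|P\|_{X;Y;2} \le \|u\|_{X;Y} \cdot \|\check Q\| \le \frac{n^n}{n!} \|u\|_{X;Y} \cdot \|Q\|$, where the last step is the polarization inequality \cite[Theorem~2.2]{mujica}. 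Taking the infimum over all factorizations of $P$ yields $\|P\|_{X;Y;2} \le \frac{n^n}{n!} \|P\|_{\Pi_{X;Y} \circ {\cal P}}$.

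Finally, for the norm $\|\cdot\|_{X;Y;1}$ under the extra hypothesis that $X$ is polynomially stable, the point is that one can work directly with the polynomial factorization $P = u \circ Q$ rather than passing through symmetrizations. Polynomial stability gives $(Q(x_j))_{j=1}^\infty \in X(G)$ whenever $(x_j)_{j=1}^\infty \in X(E)$, with the induced $n$-homogeneous polynomial $\widetilde Q \colon X(E) \longrightarrow X(G)$ satisfying $\|\widetilde Q\| = \|Q\|$; composing with $\widetilde u$ shows $\widetilde P = \widetilde u \circ \widetilde Q$ is a well-defined continuous $n$-homogeneous polynomial into $Y(F)$, so $\|P\|_{X;Y;1} = \|\widetilde P\| \le \|\widetilde u\| \cdot \|\widetilde Q\| = \|u\|_{X;Y} \cdot \|Q\|$; the infimum over factorizations gives $\|P\|_{X;Y;1} \le \|P\|_{\Pi_{X;Y} \circ {\cal P}}$.

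I do not anticipate a serious obstacle here: the argument is a routine unwinding of the factorization definitions against the stability axioms, and the only slightly delicate point is being careful that the $\frac{n^n}{n!}$ factor is genuinely needed for $\|\cdot\|_{X;Y;2}$ (because that norm is built from $\check P$, whose norm can exceed $\|P\|$) but is \emph{not} needed for $\|\cdot\|_{X;Y;1}$ (because that norm is built from $\widetilde P$ directly, and polynomial stability of $X$ lets us bypass symmetrization entirely).
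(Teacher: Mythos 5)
Your proposal is correct and follows essentially the same route as the paper: unwind the factorization $A=u\circ B$ using the multilinear stability of $X$ and the induced-operator factorization $\widetilde A=\widetilde u\circ\widetilde B$, pass to polynomials via $\check P=u\circ\check Q$ with the polarization constant $\frac{n^n}{n!}$ for $\|\cdot\|_{X;Y;2}$, and use polynomial stability of $X$ to handle $\|\cdot\|_{X;Y;1}$ directly through $\widetilde P=\widetilde u\circ\widetilde Q$. The only cosmetic differences are that the paper verifies $\|\widetilde A\|\le\|u\|_{X;Y}\|B\|$ by an explicit supremum computation and justifies $\|u\circ\check Q\|_{X;Y}\le\|u\|_{X;Y}\|\check Q\|$ via the hyper-ideal property of ${\cal L}_{X;Y}$, whereas you invoke the first part of the proposition; both are valid.
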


\begin{proof} Given $A \in (\Pi_{X;Y} \circ {\cal L})(E_1,\ldots, E_n;F)$, there are a Banach space $G$,  $u \in \Pi_{X;Y}(G;F)$ and $B \in {\cal L}(E_1,\ldots, E_n;G)$ such that que $A = u \circ B$. Since $X$ is multilinearly stable and $u$ is $(X;Y)$-summing, it follows that 
$A \in {\cal L}_{X;Y}(E_1, \ldots, E_n;F)$. Furthermore,
\begin{align*} \|A\|_{X;Y} & = \|\widetilde{A}\| = \sup\{\|(A(x_j^1, \ldots, x_j^n))_{j=1}^\infty\|_{Y(F)} : (x_j^k)_{j=1}^\infty \in B_{X(E_k)}, k = 1, \ldots, n\}\\
&= \sup\{\|(u(B(x_j^1, \ldots, x_j^n)))_{j=1}^\infty\|_{Y(F)} : (x_j^k)_{j=1}^\infty \in B_{X(E_k)}, k = 1, \ldots, n\}\\
&= \sup\{\left\|\widetilde{u}\right((B(x_j^1, \ldots, x_j^n))_{j=1}^\infty)\left)\right\|_{Y(F)} : (x_j^k)_{j=1}^\infty \in B_{X(E_k)}, k = 1, \ldots, n\}\\
& \leq \|\widetilde{u}\|_{{\cal L}(X(G);Y(F))} \cdot \sup\{\left\|(B(x_j^1, \ldots, x_j^n))_{j=1}^\infty)\right\|_{X(G)} : (x_j^k)_{j=1}^\infty \in B_{X(E_k)}, k = 1, \ldots, n\}\\
& = \|u\|_{X;Y} \cdot \sup\left\{\mbox{\large{$\|$}}\widetilde{B}\left((x_j^1)_{j=1}^\infty, \ldots, (x_j^n)_{j=1}^\infty\right))\mbox{\large{$\|$}}_{X(G)} : (x_j^k)_{j=1}^\infty \in B_{X(E_k)}, k = 1, \ldots, n\right\}\\
&=  \|u\|_{X;Y} \cdot \mbox{\large{$\|$}}\widetilde{B} \colon X(E_1) \times \cdots \times X(E_k) \longrightarrow X(G)\mbox{\large{$\|$}} =  \|u\|_{X;Y}\|\cdot \|B\|.
\end{align*}
Taking the infimum over all such factorizations of $A$ we get $ \|A\|_{X;Y} \leq \|A\|_{\Pi_{X;Y}\circ {\cal L}}$.

In the chain
\begin{align*}P \in \Pi_{X;Y} \circ {\cal P}(^mE;F) &\Longrightarrow  \check{P} \in \Pi_{X;Y} \circ {\cal L}(^mE;F)\Longrightarrow \check{P}  \in {\cal L}_{X;Y}(^mE;F)\\
& \Longrightarrow P \in {\cal P}_{X;Y}(^mE;F),
\end{align*}
the first implication follows from \cite{prims}, the second from Proposition  \ref{xycomp} and the third from Theorem \ref{equivhippol}. Take a Banach space $G$, $Q \in {\cal P}(^nE;G)$ and $u \in \Pi_{X;Y}(G;F)$ such that $P = u \circ Q$. Since ${\cal L}_{X;Y}$ is a Banach hyper-ideal (Theorem \ref{L_XYhiperideal}), we have
$$\|P\|_{X;Y;2} = \|\check{P}\|_{X;Y} = \|u \circ \check{Q}\|_{X;Y} \leq \|u\|_{X;Y}\cdot \|\check{Q}\| \leq \frac{n^n}{n!}\cdot \|u\|_{X;Y}\cdot \|Q\|. $$
Taking the infimum over all such factorizations of $P$ we get the first polynomial norm inequality.

For $X$ polynomially stable, the second polynomial norm inequality follows similarly to the multilinear case proved above. 
\end{proof}

We finish the paper by providing an example where $\Pi_{X;Y} \circ {\cal L} \neq {\cal L}_{X;Y}$ and $\Pi_{X;Y} \circ {\cal P} \neq {\cal P}_{X;Y}$ with $X$ multilinearly stable.

\begin{example}\rm For $p \geq 1$, by $\Pi_p$ we denote the Banach ideal of absolutely $p$-summing operators, that is, 
$\Pi_p = \Pi_{\ell_p^w(\cdot); \ell_p(\cdot)}$. 
Let $E$ be a nonreflexive Banach space. Choose $\varphi \in E^*$, $\|\varphi\| = 1$, and consider the continuous bilinear operator
$$A \colon E \times E \longrightarrow E~,~A(x,y) = \varphi(x)y. $$
On the one hand, given sequences $(x_j)_{j=1}^\infty, (y_j)_{j=1}^\infty \in \ell_p^w(E)$, taking $K > 0$ such that $\|y_j\| \leq K$ for every $j$ (weakly $p$-summable sequences are bounded), we have
\begin{align*}\sum_{j=1}^\infty \|A(x_j,y_j)\|^p = \sum_{j=1}^\infty |\varphi(x_j)|^p\cdot \|y_j\|^p  \leq K^p \cdot \sum_{j=1}^\infty |\varphi(x_j)|^p \leq K^p \cdot \sup_{\psi \in B_{E^*}}\sum_{j=1}^\infty |\psi(x_j)|^p  <  \infty,
\end{align*}
showing that $(A(x_j, y_j))_{j=1}^\infty \in \ell_p(E)$; hence $A \in {\cal L}_{\ell_p^w(\cdot); \ell_p(\cdot)}(^2E;E)$.

On the other hand, from \cite[Proposition p.\,461]{jmaa2002} we know that $A$ is not weakly compact. By \cite{teseryan} the linearization of $A$ on the projective tensor product $E \widehat{\otimes}_\pi E$ is a non-weakly compact linear operator, hence a non-absolutely $p$-summing linear operator by \cite[Theorem 2.17]{djt}. By \cite[Proposition 3.2]{prims} it follows that  que $A \notin ({\Pi_p} \circ {\cal L})(^2E;E)$, proving that ${\cal L}_{\ell_p^w(\cdot); \ell_p(\cdot)} \neq  \Pi_p \circ {\cal L}=  \Pi_{\ell_p^w(\cdot); \ell_p(\cdot)} \circ {\cal L} $. In particular, ${\cal L}_{\ell_p^w(\cdot); \ell_p(\cdot)}$ is not a composition ideal. In the case $p = 1$ we have $\ell_1^w(\cdot)$ multilinearly stable and ${\cal L}_{\ell_1^w(\cdot); \ell_1(\cdot)}$ 
is not a composition multi-ideal.

For polynomials, taking $P = \widehat{A}$, Theorem  \ref{equivhippol} gives $P \in {\cal P}_{\ell_p^w(\cdot); \ell_p(\cdot)}(^2E;E)$ and the same argument of the bilinear case gives $P \notin ({\Pi_p} \circ {\cal P})(^2E;E)$. This proves that ${\cal P}_{\ell_p^w(\cdot); \ell_p(\cdot)} \neq \Pi_{\ell_p^w(\cdot); \ell_p(\cdot)} \circ {\cal P} $. In the case $p = 1$ we have $\ell_1^w(\cdot)$ multilinearly stable and ${\cal P}_{\ell_1^w(\cdot); \ell_1(\cdot)}$ is not a composition polynomial ideal.
\end{example}

\bigskip

\noindent Faculdade de Matem\'atica~~~~~~~~~~~~~~~~~~~~~~Instituto de Matem\'atica e Estat\'istica\\
Universidade Federal de Uberl\^andia~~~~~~~~ Universidade de S\~ao Paulo\\
38.400-902 -- Uberl\^andia -- Brazil~~~~~~~~~~~~ 05.508-090  -- S\~ao Paulo -- Brazil\\
e-mail: botelho@ufu.br ~~~~~~~~~~~~~~~~~~~~~~~~~e-mail: raquelwood@ime.usp.br

\end{document}